\numberwithin{equation}{section} 
\newtheorem{theorem}{Theorem}[section]
\newtheorem{corollary}[theorem]{Corollary}
\newtheorem{lemma}[theorem]{Lemma}
\newtheorem{proposition}[theorem]{Proposition}
\theoremstyle{definition}
\newtheorem{Remark}[theorem]{Remark}
\newcommand{\lc}{\left\lceil}
\newcommand{\lf}{\left\lfloor}
\newcommand{\rc}{\right\rceil}
\newcommand{\rf}{\right\rfloor}
\newcommand{\EE}{{\bf  E}}
\newcommand{\ZZ}{{\mathbb Z}}
\newcommand{\PP}{{\bf  P}}
\newcommand{\Var}{{\bf Var}}
\newcommand{\ct}{{\tilde{c}}}
\newcommand{\Lc}{{\mathcal L}}
\newcommand{\Leq}{{\,\stackrel{\Lc}{=}\,}}
\newcommand{\begp}{\begin{proposition}}
\newcommand{\enp}{\end{proposition}}
\newcommand{\begt}{\begin{theorem}}
\newcommand{\ent}{\end{theorem}}
\newcommand{\begl}{\begin{lemma}}
\newcommand{\enl}{\end{lemma}}
\newcommand{\begc}{\begin{corollary}}
\newcommand{\enc}{\end{corollary}}
\newcommand{\begcl}{\begin{claim}}
\newcommand{\encl}{\end{claim}}
\newcommand{\begr}{\begin{Remark}\rm}
\newcommand{\enr}{\end{Remark}}
\newcommand{\begal}{\begin{algorithm}}
\newcommand{\enal}{\end{algorithm}}
\newcommand{\begd}{\begin{definition}}
\newcommand{\enf}{\end{definition}}
\newcommand{\begx}{\begin{example}}
\newcommand{\enx}{\end{example}}
\newcommand{\bega}{\begin{array}}
\newcommand{\ena}{\end{array}}
\newcommand{\sfrac}[2]{{\textstyle\frac{#1}{#2}}}
\def\rompar(#1){\textup(#1\textup)}    
\newcommand\bigpar[1]{\bigl(#1\bigr)}
\newcommand\bigbrack[1]{\bigl[#1\bigr]}
\newcommand\Bigpar[1]{\Bigl(#1\Bigr)}
\newcommand\lrpar[1]{\left(#1\right)}
\newcommand\lrbrack[1]{\left[#1 \right]}
\newcommand\ga{\alpha}
\newcommand\gl{\lambda}
\newcommand\gL{\Lambda}
\newcommand{\refS}[1]{Section~\ref{#1}}
\newcommand{\refT}[1]{Theorem~\ref{#1}}
\newcommand{\refL}[1]{Lemma~\ref{#1}}
\newcommand{\refP}[1]{Proposition~\ref{#1}}
\newcommand{\refR}[1]{Remark~\ref{#1}}
\newcommand\ie{i.e.\spacefactor=1000}
\newcommand\eg{e.g.\spacefactor=1000}
\newcommand\cf{{cf.}\spacefactor=1000}
\renewcommand\Re{\operatorname{Re}}
\newcommand\bbC{\mathbb C}
\newcommand\ils{I(\gl;s)}
\newcommand\ilsi{I(\gl;s-1)}
\newcommand\Holder{H\"older}
\newcommand\fall[1]{^{\underline{#1}}}
\newcommand{\unif}{\mathrm{unif}}
\newcommand\urladdrx[1]{{\urladdr{\def~{{\tiny$\sim$}}#1}}}
\begin{document}

\newcommand{\tab}[0]{\hspace{.1in}}

\title[The Number of Bit Comparisons Used by Quicksort]
{The Number of Bit Comparisons Used by Quicksort:\\
An Average-case Analysis}

\author{James Allen Fill}
\address{Department of Applied Mathematics and Statistics,
The Johns Hopkins University,
3400 N.~Charles Street,
Baltimore, MD 21218-2682 USA}
\email{jimfill@jhu.edu}
\urladdrx{http://www.ams.jhu.edu/~fill/}
\thanks{Research of the first author supported by NSF grants DMS-0104167 and DMS-0406104
and by the Acheson~J.~Duncan Fund for the Advancement of Research in
Statistics.}
\author{Svante Janson}
\address{Department of Mathematics,
Uppsala University, 
P.~O.~Box 480,
SE-751 06 Uppsala, Sweden}
\email{svante.janson@math.uu.se}
\urladdrx{http://www.math.uu.se/~svante/}

\date{February~10, 2012}

\begin{abstract} \small\baselineskip=9pt
The analyses of many algorithms and data structures (such as digital search
trees) for searching and sorting are based on the representation of the keys
involved as bit strings and so count the number of bit comparisons.  On the
other hand, the standard analyses of many other algorithms (such as {\tt
Quicksort}) are performed in terms of the number of key comparisons.  We
introduce the prospect of a fair comparison between algorithms of the two
types by providing an average-case analysis of the number of bit comparisons
required by {\tt Quicksort}.  Counting bit comparisons rather than key
comparisons introduces an extra logarithmic factor to the asymptotic average
total.  We also provide a new algorithm,
``{\tt BitsQuick}'',
that reduces
this factor to constant order by eliminating needless bit comparisons.
\end{abstract}

\maketitle

\section{Introduction and summary}
\label{S:intro}

Algorithms for sorting and searching (together with their accompanying
analyses) generally fall into one of two categories: either the algorithm is
regarded as comparing items pairwise irrespective of their internal structure
(and so the analysis focuses on the number of comparisons), or else it is
recognized that the items (typically numbers) are represented as bit
strings and that the algorithm operates on the individual bits.  Typical
examples of the two types are {\tt Quicksort} and digital search trees,
respectively; see~\cite{MR56:4281}.

In this paper---a substantial expansion of the extended abstract~\cite{fjbits2004}---we 
take a first step towards bridging the gap
between the two points of view, in order to facilitate run-time comparisons
across the gap, by answering the following question posed many years ago by
Bob Sedgewick [personal communication]: What is the bit complexity of {\tt
Quicksort}?  (For a discussion of related work that has transpired in the time between~\cite{fjbits2004}
and this paper, see~\refR{R:lit} at the end of this section.)

More precisely, we consider {\tt Quicksort} (see \refS{S:fixkey} for a
review) applied to~$n$ distinct keys (numbers) from the interval $(0, 1)$. 
Many authors (Knuth~\cite{MR56:4281}, R\'{e}gnier~\cite{MR90k:68132},
R\"{o}sler~\cite{MR92f:68028}, Knessl and Szpankowski~\cite{MR2000b:68051},
Fill and Janson~\cite{FillJansonbook}~\cite{MR1932675}, Neininger and
Ruschendorff~\cite{NR}, and others) have studied $K_n$, the (random) number of
key comparisons performed by the algorithm.  This is a natural measure of the
cost (run-time) of the algorithm, if each comparison has the same cost.  On
the other hand, if comparisons are done by scanning the bit representations of
the numbers, comparing their bits one by one, then the cost of comparing two
keys is determined by the number of bits compared until a difference is
found.  We call this number the number of \emph{bit comparisons} for the key
comparison, and let $B_n$ denote the total number of bit comparisons when~$n$
keys are sorted by {\tt Quicksort}.

We assume that the keys $X_1, \dots, X_n$ to be sorted are independent
random variables with a common continuous distribution~$F$ over $(0, 1)$.  It
is well known that the distribution of the number~$K_n$ of key comparisons does
not depend on~$F$.  This invariance clearly fails to extend to the
number~$B_n$ of bit comparisons, and so we need to specify~$F$.

For simplicity, we study mainly the case that~$F$ is the uniform
distribution, and, throughout, the reader should assume 
this as
the default.  
But we also give a result valid for a general absolutely
continuous distribution~$F$ over $(0, 1)$ (subject to a mild integrability
condition on the density). 

In this paper we focus on the mean of~$B_n$.  One of our main
results is the following \refT{T:fixunif}, the concise version of
which is the asymptotic equivalence
$$
\EE\,B_n \sim n (\ln n) (\lg n)\mbox{\ as $n \to \infty$}.
$$
Throughout, we use $\ln$ (respectively, $\lg$) to denote natural (resp.,\
binary) logarithm, and use $\log$ when the base doesn't matter (for example,
in remainder estimates).  The symbol~$\doteq$ is used to denote approximate
equality, and~$\gamma \doteq 0.57722$ is Euler's constant.

\begin{theorem}\label{T:fixunif}
If the keys $X_1, \dots, X_n$ are independent and uniformly distributed on
$(0, 1)$, then the number~$B_n$ of bit comparisons required to sort these
keys using {\tt Quicksort} has expectation given by the following exact and
asymptotic expressions: 
\begin{align}
\EE\,B_n
\label{fixexact1}
 &= 2 \sum_{k = 2}^n (-1)^k \binom{n}{k} \frac{1}{(k - 1) k [1 - 2^{- (k -
       1)}]} \\
\label{fixasy}
 &= n (\ln n) (\lg n) - c_1 n \ln n + c_2 n + \pi_n n + O(\log n),
\end{align}
where, with $\beta := 2 \pi / \ln 2$,
\begin{align*}
c_1   &  :=   \frac{1}{\ln 2} (4 - 2 \gamma - \ln 2) \doteq 3.105, \\
c_2   &  :=   \frac{1}{\ln 2} \left[\frac{1}{6} (6 - \ln 2)^2 - (4 - \ln 2)
                 \gamma + \frac{\pi^2}{6} + \gamma^2 \right] 
\doteq 6.872, 
\end{align*}
and
\begin{equation}
  \label{pin}
\pi_n := \sum_{k \in \ZZ:\,k \neq 0} \frac{i}{\pi k (-1 - i\beta k)} \Gamma(-1
- i \beta k) n^{i \beta k} 
\end{equation}
is periodic in $\lg n$ with period~$1$ and amplitude
smaller than $5 \times 10^{-9}$.
\end{theorem}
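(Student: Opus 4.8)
The plan is to establish the exact identity \eqref{fixexact1} by a dyadic-interval decomposition of the bit count, and then obtain the asymptotic expansion \eqref{fixasy} from it via the Nörlund--Rice integral method. For \eqref{fixexact1}: when {\tt Quicksort} compares two keys $x<y$, the number of bits it reads is the index of the first bit position at which $x$ and $y$ disagree, and this equals $\sum_{j\ge0}\mathbf 1\{x\ \text{and}\ y\ \text{lie in a common dyadic subinterval of}\ (0,1)\ \text{of length}\ 2^{-j}\}$. Summing over the pairs {\tt Quicksort} actually compares and taking expectations (all terms being nonnegative, so Tonelli applies throughout), $\EE\,B_n=\sum_{j\ge0}\EE\,C_n^{(j)}$, where $C_n^{(j)}$ counts the key comparisons made between two keys lying in a common dyadic interval of length $2^{-j}$. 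The key point is that two keys $x<y$ lying in a common interval $I$ are compared iff one of them is the first, in the uniformly random insertion order, among all keys whose values lie in $[x,y]\subseteq I$; hence the number of comparisons between pairs inside a fixed $I$ equals the number of key comparisons made by {\tt Quicksort} run on the keys of $I$ alone. By the distribution-independence of the key-comparison count and the scale invariance of the uniform law, the conditional expectation of this number given that $I$ contains $s$ keys is $\EE\,K_s=2(s+1)H_s-4s$, where $H_s:=\sum_{i=1}^s 1/i$. Since each of the $2^j$ dyadic intervals of length $2^{-j}$ contains $\mathrm{Bin}(n,2^{-j})$ keys, $\EE\,C_n^{(j)}=2^j\,\EE\,K_S$ with $S\sim\mathrm{Bin}(n,2^{-j})$. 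Now substitute the equivalent finite-difference form $\EE\,K_s=2\sum_{k=2}^s(-1)^k\binom sk\frac1{(k-1)k}$, use $\EE\binom Sk=\binom nk2^{-jk}$, interchange the sums over $j$ and $k$, and sum the geometric series $\sum_{j\ge0}2^{-j(k-1)}=1/(1-2^{-(k-1)})$; this is precisely \eqref{fixexact1}.

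For \eqref{fixasy}, set $f(s):=\bigl[(s-1)\,s\,(1-2^{-(s-1)})\bigr]^{-1}$, which is analytic at $s=2,3,\dots,n$, so the Nörlund--Rice integral represents $\sum_{k=2}^n(-1)^k\binom nk f(k)$ as a contour integral that, after shifting to the vertical line $\Re s=c_0$ for a fixed $c_0\in(-1,0)$, equals $\sum_{s^*}\mathrm{Res}_{s=s^*}\bigl[\Gamma(-s)\,\tfrac{\Gamma(n+1)}{\Gamma(n+1-s)}\,f(s)\bigr]+R_n$, the sum running over the poles $s^*$ of the integrand with $c_0<\Re s^*<2$ and $R_n=O(n^{c_0})=o(1)$ (using $|n!/\prod_{j=0}^n(s-j)|=O(n^{c_0})$ uniformly on $\Re s=c_0$ and the integrability of $f$ there, which rests on $1-2^{-(s-1)}$ being bounded and bounded away from $0$ on any vertical line with $\Re s\ne1$). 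The relevant poles are: a triple pole at $s=1$ (since $1-2^{-(s-1)}$ vanishes there, $f$ has a double pole, and $\Gamma(-s)$ contributes a simple pole); simple poles at $s=1+i\beta k$, $k\in\ZZ\setminus\{0\}$ (zeros of $1-2^{-(s-1)}$); and a double pole at $s=0$. Replacing $\Gamma(n+1)/\Gamma(n+1-s)$ by $n^s$ affects the final count only by $O(\log n)$, because the correction is $O(1/n)$ and carries a factor $s(s-1)$ that vanishes at $s=1$. Then: the residue at $s=0$ is $O(\log n)$; the residues at $s=1+i\beta k$, using $\mathrm{Res}_{s=1+i\beta k}(1-2^{-(s-1)})^{-1}=1/\ln2$ and $\beta=2\pi/\ln2$, sum to $\tfrac12\pi_n n+O(1)$ with $\pi_n$ as in \eqref{pin}; and the residue at $s=1$, read off from the Laurent expansions $\Gamma(-s)=(s-1)^{-1}+(\gamma-1)+\bigl(1-\gamma+\tfrac{\gamma^2}{2}+\tfrac{\pi^2}{12}\bigr)(s-1)+\cdots$, $n^s=n\bigl(1+(s-1)\ln n+\tfrac12(s-1)^2\ln^2 n+\cdots\bigr)$ and $f(s)=\tfrac1{\ln2}(s-1)^{-2}\bigl(1+(\tfrac{\ln2}{2}-1)(s-1)+\cdots\bigr)$, equals $\tfrac n{\ln2}\bigl[\tfrac12\ln^2 n+(\tfrac{\ln2}{2}+\gamma-2)\ln n+\text{const}\bigr]$. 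Multiplying the whole identity by $2$ turns these three contributions into $n(\ln n)(\lg n)-c_1n\ln n+c_2n$ (a short computation identifies the constant with $c_2$), into $\pi_n n+O(1)$, and into $O(\log n)$, yielding \eqref{fixasy}.

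Finally, write $\pi_n=\sum_{k\ne0}c_k n^{i\beta k}$ with $c_k:=\tfrac{i}{\pi k(-1-i\beta k)}\Gamma(-1-i\beta k)$: periodicity in $\lg n$ with period $1$ is immediate from $n^{i\beta k}=e^{2\pi ik\lg n}$; reality follows from $c_{-k}=\overline{c_k}$ (because $\overline{\Gamma(z)}=\Gamma(\bar z)$), so $\pi_n=2\sum_{k\ge1}\Re\bigl(c_k n^{i\beta k}\bigr)$; and $\sup_n|\pi_n|\le\sum_{k\ne0}|c_k|=\tfrac2\pi\sum_{k\ge1}\tfrac{|\Gamma(-1-i\beta k)|}{k\,|1+i\beta k|}$, which, since $\beta\doteq9.06$ and $|\Gamma(-1-i\beta k)|=\sqrt\pi\,\bigl(|1+i\beta k|\sqrt{\beta k\sinh(\pi\beta k)}\bigr)^{-1}$ decays like $e^{-\pi\beta k/2}$ (so the $k=1$ term dominates overwhelmingly), is below $5\times10^{-9}$. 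I expect the main obstacle to be the bookkeeping in the triple-pole residue at $s=1$: that is where the precise constants $c_1$ and $c_2$ are produced — in particular the $\pi^2/6$ arising from the quadratic Laurent coefficient of $\Gamma(-s)$ at $s=1$ — and where sign and coefficient errors are easiest to make; a secondary care-point is checking that every subdominant ingredient (the $O(1/n)$ kernel correction near $s=1$, the pole at $s=0$, the tail of the sum over $k$, and the shifted-line integral) is genuinely absorbed into the claimed $O(\log n)$ error.
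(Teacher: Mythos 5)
Your derivation of the exact formula~\eqref{fixexact1} is correct but takes a genuinely different route from the paper, while your derivation of~\eqref{fixasy} is essentially the paper's.

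For~\eqref{fixexact1}, the paper integrates $b(x,y)$ directly against the ``comparison density'' $p_n(x,y)=\sum_{i<j}\PP(C_{ij})g_{n,i,j}(x,y)$, expands $p_n$ as a polynomial in $y-x$, and then evaluates $\int\!\!\int b(x,y)(y-x)^{k-2}\,dy\,dx$ by a dyadic split of the domain. You instead write $b(x,y)=\sum_{j\ge0}\mathbf 1\{x,y\text{ share a dyadic interval of length }2^{-j}\}$, invoke the self-similarity of {\tt Quicksort} (the comparisons between keys lying in a common interval $I$ are exactly the comparisons {\tt Quicksort} would make on the keys of $I$ alone), and reduce to $\EE B_n=\sum_{j\ge0}2^j\,\EE K_{S_j}$ with $S_j\sim\mathrm{Bin}(n,2^{-j})$, then push the binomial through the finite-difference representation of $\EE K_s$. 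This is cleaner and more structural, and it is precisely the fixed-$n$ analogue of the Poissonized identity $\mu_{\mathrm{unif}}(\gl)=\sum_{k\ge0}2^k\,\EE K(2^{-k}\gl)$ that the paper only deploys later in~\refS{S:trans} (formula~\eqref{Poi_bit_vs_key}). What your route buys is that the recursive structure is visible from the start, the binomial/Poisson parallel is explicit, and you never need to compute the double integral of $b(x,y)(y-x)^{k-2}$. What the paper's route buys is that $p_n(x,y)$ is a general-purpose object reused elsewhere (e.g., in \refR{R:keyrice} for $\EE K_n$ by replacing $b$ with $1$). Your reduction from a pair inside $I$ to ``first pivot among the keys in $[x,y]$'' is valid, since $[x,y]\subseteq I$, so restricting the priority ordering to the keys of $I$ does not change which key in $[x,y]$ has smallest priority.

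For~\eqref{fixasy} you apply the Nörlund--Rice method with the same kernel $f(s)=[(s-1)s(1-2^{-(s-1)})]^{-1}$, identify the same triple pole at $s=1$, simple poles at $s=1+i\beta k$, and double pole at $s=0$, and recover $c_1$, $c_2$, $\pi_n$ by residue calculus, exactly as in the paper. The one place you diverge is that the paper keeps the exact $B(n+1,-s)$ kernel and expresses the residues in closed form through $H_{n-1}$, $H_{n-1}^{(2)}$, and $n!/\Gamma(n-i\beta k)$, deferring to Stirling only at the very end, whereas you replace $\Gamma(n+1)/\Gamma(n+1-s)$ by $n^s$ inside the residue calculation and argue the replacement costs only $O(\log n)$. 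Your justification is sound: the correction factor is $1+\frac{s(s-1)}{2n}+O(n^{-2})$, which vanishes to first order at $s=1$ and so downgrades the triple pole to a double one for the error term, giving $O(\log n)$ rather than the naive $O(\log^2 n)$; at the oscillatory poles the per-$k$ error is $O(1)$ and the exponentially small residues make the sum $O(1)$; and the $s=0$ contribution is $O(\log n)$ regardless. Your handling of $\pi_n$ (periodicity, reality via $c_{-k}=\overline{c_k}$, and the exponential decay $|\Gamma(-1-i\beta k)|\sim e^{-\pi\beta k/2}$ giving amplitude below $5\times10^{-9}$) matches the paper's claims.
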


Small periodic fluctuations as in \refT{T:fixunif} come as a surprise
to newcomers 
to the analysis of algorithms but in fact are quite common in the analysis of
digital structures and algorithms; see, for example, Chapter~6
in~\cite{MR93f:68045}.

For our further results, it is technically convenient to assume that the
number of keys is no longer fixed at~$n$, but rather Poisson distributed with
mean~$\gl$ and independent of the values of the keys.  (In this paper, 
we shall not deal with the ``de-Poissonization'' that would be needed to transfer
results back to the fixed-$n$ model.)
In obvious notation, the Poissonized version of
\eqref{fixexact1}--\eqref{fixasy} is 
\begin{align}
\EE\,B({\gl})
\label{Poiexact}
 &= 2 \sum_{k = 2}^{\infty} (-1)^k \frac{\gl^k}{k!} \times \frac{1}{(k - 1) k [1 - 2^{- (k -
       1)}]} \\
\label{Poiasy}
 &= \gl (\ln \gl) (\lg \gl) - c_1 \gl \ln \gl + c_2 \gl + \pi_{\gl} \gl 
+O(\log \gl)
\mbox{\ \ as $\gl \to \infty$},
\end{align}
with $\pi_\gl$ as in \eqref{pin}.
The exact formula follows immediately from~\eqref{fixexact1}, and the
asymptotic formula 
is established  in \refS{S:trans} as \refP{P:Poiasy}.
We
will also see (\refP{P:varbound}) that $\Var\,B({\gl}) = O(\gl^2)$,
so~$B({\gl})$ 
is concentrated about its mean.  Since the number $K(\gl)$ of key
comparisons is 
likewise concentrated about its mean $\EE\,K(\gl) \sim 2 \gl \ln \gl$ for
large~$\gl$ (see Lemmas~\ref{L:Poikey} and~\ref{L:Poikeymoments}), it
follows that 
\begin{equation}
\label{bitsperkey}
\frac{2}{\lg \gl} \times \frac{B(\gl)}{K(\gl)} \to 1\mbox{\ \ in probability as
$\gl \to \infty$}.
\end{equation}
In other words, about $\frac{1}{2} \lg \gl$ bits are compared per key
comparison.

\begin{Remark}
Further terms can be obtained in \eqref{fixasy} and \eqref{Poiasy} by the
methods used in the proofs below. 
In particular, the $O(\log\gl)$ in \eqref{Poiasy} can be refined to
\begin{equation*}
  -2 \log\gl - c_4 + O(\gl^{-M})
\end{equation*}
for any fixed $M$, with 
$$
c_4 := 4 \ln 2 + 2 + 2 \gamma \doteq 5.927.
$$
\end{Remark}

For non-uniform distribution~$F$, we have the same leading term for the
asymptotic expansion of $\EE\,B(\gl)$, but the second-order term is larger. 
(Throughout, $\ln_+$ denotes the positive part of the natural logarithm
function.
We denote the uniform distribution by $\unif$.)

\begin{theorem} \label{T:Poif}
Let $X_1, X_2, \dots$ be independent with a common distribution~$F$
over $(0, 1)$ 
having density~$f$, and let~$N$ be independent and Poisson with mean~$\gl$.  If
\mbox{$\int^1_0\!f (\ln_+ f)^4 < \infty$}, then the expected number of bit
comparisons, call it $\mu_f(\gl)$, required to sort the keys $X_1,
\dots, X_N$ using {\tt Quicksort} satisfies
$$
\mu_f(\gl) = \mu_{\unif}(\gl) + 2 H(f) \gl \ln \gl +
o(\gl \log \gl)
$$
as $\gl \to \infty$, where $H(f) := \int^1_0\!f \lg f \geq 0$ is the
entropy (in bits) of the density~$f$.
\end{theorem}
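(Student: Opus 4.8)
\smallskip
\noindent\emph{Proof proposal.}
The plan rests on a reduction of $\mu_f(\gl)$ to a sum, over all dyadic subintervals of $(0,1)$, of the Poissonized expected number of \emph{key} comparisons. For $k\ge0$ let $\mathcal D_k$ be the set of the $2^k$ dyadic intervals of length $2^{-k}$ in $(0,1)$, and for $I\in\mathcal D_k$ put $p_I:=\int_I f$. The number of bit comparisons triggered by a key comparison between two distinct keys $x,y$ equals one plus the length of the longest common prefix of their binary expansions, which is $\#\{k\ge0:\ x,y\text{ lie in a common }I\in\mathcal D_k\}$; summing over the depth $k$ and then over $I\in\mathcal D_k$, the total number of bit comparisons is $\sum_{k\ge0}\sum_{I\in\mathcal D_k}(\text{number of compared pairs having both keys in }I)$. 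Now fix $I$ and condition on the key values: if $x,y\in I$ then $[x,y]\subseteq I$, so \Quicksort{} compares $x$ and $y$ iff the first key with value in $[x,y]$ to be chosen as a pivot is $x$ or $y$, which is precisely the event that $x$ and $y$ would be compared were \Quicksort{} run on the keys of $I$ alone. Hence the conditional expected number of compared pairs within $I$ is $\EE\,K(N_I)$, $N_I$ being the number of keys in $I$; after Poissonization ($N_I\sim\mathrm{Poisson}(\gl p_I)$, independent over $I$) this yields the master identity
$$
\mu_f(\gl)=\sum_{k\ge0}\ \sum_{I\in\mathcal D_k}\psi(\gl p_I),\qquad \psi(\mu):=\EE\,K(\mu),
$$
the uniform case being $\mu_{\unif}(\gl)=\sum_{k\ge0}2^{k}\psi(\gl 2^{-k})$ (here $\psi$ is the function analyzed in Lemmas~\ref{L:Poikey} and~\ref{L:Poikeymoments}).

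I would then subtract the two series and extract the entropy. From $\EE\,K_n=2(n+1)H_n-4n$ one gets $\psi(\mu)=2\mu\ln\mu+(2\gamma-4)\mu+O(\log(2+\mu))$ for $\mu\ge1$ and $\psi(\mu)=O(\mu^2)$ for $0<\mu\le1$. Since $\sum_{I\in\mathcal D_k}p_I=1=\sum_{I\in\mathcal D_k}2^{-k}$, replacing $\psi$ by $2\mu\ln\mu+(2\gamma-4)\mu$ on the intervals where the argument is large makes the linear-in-$\mu$ terms cancel level by level, and what survives from level $k$ is $2\gl(k\ln 2-h_k)$, where $h_k:=-\sum_{I\in\mathcal D_k}p_I\ln p_I$ is the natural-logarithm entropy of the level-$k$ dyadic coarsening of $F$. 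Writing $f_k:=\EE[f\mid\mathcal D_k]$ for the dyadic martingale approximants, one has $k\ln 2-h_k=\int_0^1 f_k\ln f_k$; as $x\ln x$ is convex, $\{f_k\ln f_k\}$ is a submartingale, and conditional Jensen applied to $x\ln_+ x$ gives $f_k\ln_+ f_k\le\EE[f\ln_+ f\mid\mathcal D_k]$, so the family is uniformly integrable and $\int_0^1 f_k\ln f_k\uparrow\int_0^1 f\ln f=H(f)\ln 2$ (finite because $\int f\ln_+ f<\infty$ while $\int f\ln_- f\le1/e$ always). Summing $k$ up to a cutoff $K\asymp\lg\gl$ and applying Ces\`aro to the convergent summands gives $\sum_{k\le K}(k\ln 2-h_k)=K\,H(f)\ln 2+o(K)=H(f)\ln\gl+o(\log\gl)$, hence $\mu_f(\gl)-\mu_{\unif}(\gl)=2H(f)\gl\ln\gl+o(\gl\log\gl)$.

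The real work --- and the main obstacle --- is to justify the two simplifications above uniformly in $\gl$: truncating the series at a level $K$ slightly above $\lg\gl$, which requires $\sum_{k>K}\sum_{I\in\mathcal D_k}\psi(\gl p_I)=o(\gl\log\gl)$ via a dyadic-martingale tail bound for the $p_I$ (e.g.\ $F(f_k\ge2t)\le2\int(f-t)_+$); and discarding the ``small'' intervals, those with $\gl p_I$ bounded, where the expansion of $\psi$ is invalid and $\psi(\gl p_I)=O((\gl p_I)^2)$ is used instead. Handling these, together with the $O(\log(2+\cdot))$ remainders of $\psi$ summed over up to $2^k$ intervals per level (where $\prod_{I\in\mathcal D_k}p_I\le2^{-k2^k}$, i.e.\ AM--GM, enters) and across the $\asymp\log\gl$ relevant scales, is exactly where the hypothesis $\int f(\ln_+ f)^4<\infty$ is needed: it keeps the log-moments $\int_0^1 f_k(\ln f_k)^j$ of the coarsenings bounded for the few values of $j$ entering the bookkeeping and supplies the required uniformity. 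Equivalently --- and probably cleaner, in the spirit of the Mellin analysis used elsewhere in the paper --- one may write $\psi(\mu)=\frac1{2\pi i}\int_{(c)}\psi^*(s)\mu^{-s}\,ds$ with $-2<c<-1$ (so $\psi^*$ has a double pole at $s=-1$ and a simple pole at $s=-2$) and obtain $\mu_f(\gl)-\mu_{\unif}(\gl)=\frac1{2\pi i}\int_{(c)}\psi^*(s)\gl^{-s}[Z_f(s)-Z_{\unif}(s)]\,ds$, where $Z_f(s):=\sum_{k\ge0}\sum_{I\in\mathcal D_k}p_I^{-s}=\sum_{k\ge0}2^{k(s+1)}\int_0^1 f_k^{-s}$; here $Z_f-Z_{\unif}$ continues past $\Re s=-1$ and is regular at $s=-1$ with value $H(f)$ (the simple poles of $Z_f$ and $Z_{\unif}$ at $s=-1$ cancel, since $\int f_k^{-s}\to1$ there), so pushing the contour to $\Re s=-1+\delta$ leaves the double pole at $s=-1$, contributing the main term $2H(f)\gl\ln\gl$ plus $O(\gl)$, together with an $O(\gl)$ term periodic in $\lg\gl$ (from the poles at $s=-1+2\pi i n/\ln 2$) and an $O(\gl^{1-\delta})$ remainder --- all $o(\gl\log\gl)$. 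In this framing the integrability hypothesis secures the analytic continuation of $Z_f$ past $\Re s=-1$ and the vertical-line decay of $\psi^*(s)Z_f(s)$ needed for the contour shift; this analytic/uniformity step, not the structural identity or the entropy computation, is where the difficulty lies.
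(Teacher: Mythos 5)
Your proposal matches the paper's proof in structure and substance: the same master identity $\mu_f(\gl)=\sum_{k\ge0}\sum_{I}\EE\,K(\gl p_I)$, the same split of the double sum at level $\approx\lg\gl$, the same level-by-level cancellation against the uniform series leaving $2\gl\int f_k\ln f_k$, and the same martingale limit $\int f_k\ln f_k\to\int f\ln f$. For the two technical steps you flag as ``the real work,'' the paper's concrete tools are slightly different from what you sketch: it bounds the tail $R(\gl)=\sum_{k>m}\sum_I\EE\,K(\gl p_I)$ by $O(\gl)$ using the \emph{maximal} function $f^*:=\sup_k f_k$ together with a Doob $L(\log L)^3$ maximal inequality (Lemma~\ref{L:fstar}), which is precisely where the fourth power in the hypothesis $\int f(\ln_+f)^4<\infty$ enters (your level-by-level tail bound on each $f_k$ alone does not obviously sum over $k$; $f^*$ furnishes the needed uniformity); and for the per-level remainders it uses the uniform estimate $\EE\,K(x)=2x\ln x-(4-2\gamma)x+O(x^{1/2})$ valid for \emph{all} $x\ge0$ combined with Cauchy--Schwarz ($\sum_j(\gl p_{k,j})^{1/2}\le(\gl\,2^k)^{1/2}$), rather than the $O(\log(2+\mu))$ remainder that is wrong near $\mu=0$ (the AM--GM remark doesn't quite do the job). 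Your uniform-integrability argument for the entropy convergence is lighter than the paper's dominated-convergence-via-$f^*$ argument and is also valid; the Mellin alternative you sketch is plausible but is not the paper's route.
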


In applications, it may be unrealistic to assume that a specific
density $f$ is known. Nevertheless, even in such cases, \refT{T:Poif}
may be useful since it provides a measure of the robustness of the
asymptotic estimate
in \refT{T:fixunif}.

Bob Sedgewick (among others who heard us speak on the 
material of this paper) suggested that the number of bit comparisons for 
{\tt Quicksort} might be reduced substantially by not comparing bits that have to be
equal according to the results of earlier steps in the algorithm. 
In the final section (Theorem~\ref{T:BQ}),
we note that this is indeed the case: For a
fixed number~$n$ 
of keys, the average number of bit comparisons in the improved algorithm (which
we dub
``{\tt BitsQuick}'')
is asymptotically equivalent to $2 (1 + \frac{3}{2 \ln 2}) n \ln
n$, only a constant ($\doteq 3.2$) times the average number of key comparisons
[see~\eqref{fixkeyasy}]. 
A related algorithm is the digital version of Quicksort by
Roura \cite{Roura}; it too requires $\Theta(n\log n)$ bit comparisons
(we do not know the exact constant factor).

We may compare our results 
to those obtained for 
radix-based methods, for example
radix exchange sorting, 
see \cite[Section 5.2.2]{MR56:4281}.
This method works by bit inspections, that is, by comparisons to constant
bits, rather than by  pairwise comparisons. 
In the case of $n$ uniformly distributed keys, 
radix exchange sorting  uses asymptotically
$n\lg n$ bit inspections. Since radix exchange sorting is designed so
that the number of bit inspections is minimal, it is not surprising
that our results show that 
{\tt Quicksort} uses more bit comparisons.
More precisely, \refT{T:fixunif} shows that 
{\tt Quicksort} uses about $\ln n$ times as many bit comparisons as
radix exchange sorting.
For 
{\tt BitsQuick}, this is reduced to a small constant factor.
This gives us a measure of the cost in bit comparisons
of using these algorithms;
{\tt Quicksort} is often used 
because of other advantages, and
our results
open the possibility of
seeing when they outweigh the increase in bit comparisons.

In \refS{S:fixkey} we review {\tt Quicksort} itself and basic facts about the
number~$K_n$ of key comparisons.  In \refS{S:fixexact1} we derive the exact
formula~\eqref{fixexact1} for $\EE\,B_n$, and in \refS{S:fixasy} we
derive the asymptotic expansion~\eqref{fixasy} from an alternative exact
formula that is somewhat less elementary than~\eqref{fixexact1} but much more
transparent for asymptotics.  In the transitional \refS{S:trans} we
establish certain basic facts about the moments of $K(\gl)$ and $B(\gl)$ in
the Poisson case with uniformly distributed keys, and in \refS{S:Poif}
we use martingale 
arguments to establish \refT{T:Poif} for the expected number of bit comparisons
for Poisson($\gl$) draws from a general density~$f$.  Finally, in
\refS{S:quicker} 
we study the improved
{\tt BitsQuick}
algorithm discussed in the preceding
paragraph. 

\begin{Remark}\rm
The results can be generalized to bases other than 2. For example,
base 256 would give corresponding results on the ``byte complexity''.
\end{Remark}

\begin{Remark}\rm
Cutting off and sorting small subfiles differently would 
affect the results in
Theorems~\ref{T:fixunif} and~\ref{T:Poif}
by $O(n \log n)$ and $O(\gl\log \gl)$ only.
In particular,
the leading terms would remain the same.
\end{Remark}

\begin{Remark}\label{R:lit}\rm
In comparison with the extended abstract~\cite{fjbits2004}, new in this expanded treatment
are \refR{R:error}, Propositions~\ref{P:Poiasy} and~\ref{P:lpbound}, and \refL{L:fstar}, together with
complete proofs of \refT{T:Poif}, Lemmas~\ref{L:Poikey} and~\ref{L:Poikeymoments}, and \refR{R:holder}.
\refS{S:quicker} has been substantially revised.

In the time between~\cite{fjbits2004} and the present paper, the following developments have occurred:
\begin{itemize}
\item Fill and Nakama~\cite{FNexpectedbits} followed the same sort of approach as in this paper to obtain certain
exact and asymptotic expressions for the number of bit comparisons required by {\tt Quickselect}, a close cousin of 
{\tt Quicksort}.
\item Vall\'{e}e et al.~\cite{vcff2009} used analytic-combinatorial methods to extend the results of~\cite{fjbits2004} 
and~\cite{FNexpectedbits} by deriving asymptotic expressions for the expected number of symbol comparisons for both {\tt Quicksort} and {\tt Quickselect}.  In their work, as in the present paper, the keys are assumed to be independent and identically distributed, but the authors allow for quite general probabilistic models (also known as ``sources'') for how each key is generated as a symbol string. 
\item Fill and Nakama~\cite{FNqselectlimdistn} (see also~\cite{n2009}) obtained, for quite general sources, a limiting distribution for the (suitably scale-normalized) number of symbol comparisons required by {\tt Quickselect}.
\item Fill~\cite{fjfeb2010} obtained, for quite general sources, a limiting distribution for the (suitably center-and-scale-normalized) number of symbol comparisons required by {\tt Quicksort}.
\end{itemize}
We were motivated to expand~\cite{fjbits2004} to the present full-length paper in large part because this paper's 
Lemmas~\ref{L:Poikey} and~\ref{L:Poikeymoments}, and an extension of (the proof of) \refP{P:lpbound}, play key roles 
in~\cite{fjfeb2010}.
\end{Remark}

\section{Review:\ number of key comparisons used by Quicksort}
\label{S:fixkey}
In this section we briefly review certain basic known results concerning the
number~$K_n$ of key comparisons required by {\tt Quicksort} for a fixed
number~$n$ of keys uniformly distributed on $(0, 1)$.  (See, for example,
\cite{MR1932675} and the references therein for further details.)

{\tt Quicksort}, invented by Hoare~\cite{MR25:5609}, is the standard sorting
procedure in {\tt Unix} systems, and has been cited~\cite{DS} as one of the ten
algorithms ``with the greatest influence on the development and practice
of science and engineering in the 20th century.''  The {\tt Quicksort}
algorithm 
for sorting an array of~$n$ distinct keys is very simple to describe.  If $n
= 0$ or $n = 1$, there is nothing to do.  If $n \geq 2$, pick a key
uniformly at 
random from the given array and call it the ``pivot''.  Compare the
other keys to 
the pivot to partition the remaining keys into two subarrays.  Then recursively
invoke {\tt Quicksort} on each of the two subarrays.

With $K_0 := 0$ as initial condition, $K_n$ satisfies the distributional
recurrence relation
$$
K_n \Leq K_{U_n - 1} + K^*_{n - U_n} + n - 1,\qquad n \geq 1,
$$
where~$\Leq$ denotes equality in law (i.e.,\ in distribution), and where, on
the right, $U_n$ is distributed uniformly over the set $\{1, \dots, n\}$,
$K_j^* \Leq K_j$ for  every~$j$, and
$$
U_n;\ K_0, \dots, K_{n - 1};\ K^*_0, \dots, K^*_{n - 1}
$$
are all independent.

Passing to expectations we obtain the ``divide-and-conquer'' recurrence
relation
$$
\EE\,K_n = \frac{2}{n} \sum_{j = 0}^{n - 1} \EE\,K_j +n-1,
$$
which is easily solved to  give
\begin{align}
\label{fixkeyexact}
\EE\,K_n &= 2 (n + 1) H_n - 4 n \\
\label{fixkeyasy}
      &= 2 n \ln n - (4 - 2 \gamma) n + 2 \ln n + (2 \gamma + 1) + O(1 / n).
\end{align}
It is also routine to use a recurrence to compute explicitly the exact variance
of~$K_n$.  In particular, the asymptotics are
$$
\Var\,K_n = \sigma^2 n^2 - 2 n \ln n + O(n)
$$
where $\sigma^2 := 7 - \sfrac{2}{3} {\pi}^2 \doteq 0.4203$.  Higher
moments can be handled similarly.  Further, the normalized sequence 
$$
\widehat{K}_n := (K_n - \EE\,K_n) / n, \qquad n \geq 1,
$$
converges in distribution,
with  convergence of moments of each order, 
to~$\widehat{K}$, where the law of~$\widehat{K}$ is
characterized as the unique distribution over the real line with vanishing
mean that satisfies a certain distributional identity; and the moment
generating functions of~$\widehat{K}_n$ converge 
pointwise
to that of~$\widehat{K}$. 

\section{Exact mean number of bit comparisons}
\label{S:fixexact1}
In this section we establish the exact formula~\eqref{fixexact1}, repeated
here for  convenience as~\eqref{fixexact1again}, for the expected number of bit
comparisons required by {\tt Quicksort} for a fixed number~$n$ of keys
uniformly distributed on $(0, 1)$:
\begin{equation}
\label{fixexact1again}
\EE\,B_n = 2 \sum_{k = 2}^n (-1)^k \binom{n}{k} \frac{1}{(k - 1) k [1 - 2^{-
(k - 1)}]}. 
\end{equation}

Let $X_1, \dots, X_n$ denote the keys, and $X_{(1)} < \cdots < X_{(n)}$ their
order statistics.  Consider ranks $1 \leq i < j \leq n$. 
Formula~\eqref{fixexact1again} follows readily from the following three facts,
all either obvious or very well known:
\vspace{.1in}
\begin{itemize}
\item The event $C_{i j} := \{\text{keys }X_{(i)}$ and $X_{(j)}$ are
compared$\}$ and the random vector $(X_{(i)}, X_{(j)})$ are independent.
\item $\PP(C_{i j}) = 2 / (j - i + 1)$.  [Indeed, $C_{i j}$ equals the event
that the first pivot chosen from among $X_{(i)}, \dots, X_{(j)}$ is either
$X_{(i)}$ or $X_{(j)}$.]
\item The joint density $g_{n, i, j}$ of $(X_{(i)}, X_{(j)})$ is given by
\begin{equation}
\label{joint}
g_{n, i, j}(x, y) = \binom n {i - 1, 1, j - i - 1, 1, n - j} \,x^{i - 1} (y - x)^{j - i - 1} (1 - y)^{n - j}.
\end{equation}
\end{itemize} 

Let $b(x, y)$ denote the index of the first bit at which the numbers $x, y \in
(0, 1)$ differ.
(For definiteness we take in this paper the terminating expansion with
infinitely many zeros for dyadic rationals in $[0,1)$, but $1=.111\dots$.)
Then 
\begin{equation}
\begin{split}
\label{ebnpn}
\EE\,B_n &= \sum_{1 \leq i < j \leq n} \PP(C_{i j})
\int^1_0\!\int^1_x\!b(x, y)\,g_{n, i, j}(x, y)\,dy\,dx \\
&=
\int^1_0\!\int^1_x\!b(x, y)\,p_n(x, y)\,dy\,dx,
\end{split}
\end{equation}
where $p_n(x, y)$ has the definition and interpretation
\begin{align*}
p_n(x, y)
& := \sum_{1 \leq i < j \leq n} \PP(C_{i j}) g_{n, i, j}(x, y)\,dy\,dx \\
 &\phantom{:} 
 =\frac{\PP(\mbox{keys in $(x, x + dx)$ and $(y, y + dy)$ are compared})}
{dx\,dy}.
\end{align*}
By a routine calculation,
\begin{equation}
\label{pnsum}
\begin{split}
\hspace{-.01in} p_n(x, y) 
&= \frac{2}{(y - x)^2} \left[ \left( 1 - (y - x)
\right)^n - 1 + n(y - x) \right] \\
&= 2 \sum_{k = 2}^n (-1)^k \binom{n}{k} (y - x)^{k - 2},
  \end{split}
\end{equation}
which depends on~$x$ and~$y$ only through the difference $y - x$. 
Plugging~\eqref{pnsum} into~\eqref{ebnpn}, we find
$$
\EE\,B_n = 2 \sum_{k = 2}^n (-1)^{k} \binom{n}{k} \int^1_0\!\int^1_x\!b(x, y)
(y - x)^{k - 2}\,dy\,dx.
$$
But, 
by routine (if somewhat
lengthy) calculation,
\begin{align*}
\int^1_0\!\int^1_x\!b(x, y) (y - x)^{k - 2}\,dy\,dx
 &=  \sum_{\ell = 0}^{\infty} (\ell + 1) \int\!\!\!\!\int_{0 < x < y < 1:\,b(x,
       y) = \ell + 1} (y - x)^{k - 2}\,dx\,dy 
\\
 &=
     \sum_{\ell = 0}^{\infty} (\ell + 1) 2^{\ell} \int^{2^{-(\ell +
       1)}}_0\!\!\!\int^{2^{-\ell}}_{2^{-(\ell + 1)}}\!(y - x)^{k -
	   2}\,dy\,dx
\\
 & =  \frac{1}{(k - 1) k [1 - 2^{- (k - 1)}]}.
\end{align*}
This now leads immediately to the desired~\eqref{fixexact1again}.

\section{Asymptotic mean number of bit comparisons}
\label{S:fixasy}
Formula~\eqref{fixexact1}, repeated at~\eqref{fixexact1again}, is hardly
suitable for numerical calculations or asymptotic treatment, due to excessive
cancellations in the alternating sum.  Indeed, if (say) $n = 100$, then the
terms (including the factor~$2$, for definiteness) alternate in sign, with
magnitude as large as $10^{25}$, and yet $\EE\,B_n \doteq 2295$.  Fortunately,
there is a  standard complex-analytic technique designed for precisely our situation
(alternating binomial sums), namely, \emph{Rice's method}.  We 
 will not review the idea behind the method here, but rather
refer the reader to (for example) Section~6.4 of~\cite{MR93f:68045}.  Let
$$
h(z) := \frac{2}{(z - 1) z [1 - 2^{- (z - 1)}]}
$$
and let $B(z, w) := \Gamma(z)\Gamma(w)/ \Gamma(z+w)$ denote the
(meromorphic continuation) of the classical beta function.   According to
Rice's method, $\EE\,B_n$ equals the sum of the residues of the function $B(n +
1, - z) h(z)$ at
\begin{itemize}
\item the triple pole at $z = 1$;
\item the simple poles at $z = 1 + i \beta k$, for $k \in \ZZ\setminus\{0\}$;
\item the double pole at $z = 0$.
\end{itemize}
The residues are easily calculated, especially with the aid of such
symbolic-manip\-ulation software as {\tt Mathematica} or {\tt Maple}. 
Corresponding to the above list, the residues equal
\begin{itemize}
\item
$\frac{n}{\ln 2} \left[ H^2_{n - 1} - (4 - \ln 2) H_{n - 1} + \frac{1}{6}
(6 - \ln 2)^2 + H^{(2)}_{n - 1} \right]$;
\item
$\frac{i}{\pi k (-1 - i\beta k)} \Gamma(-1 - i \beta k) \frac{n!}{\Gamma(n - i
\beta k)}$; \vspace{1mm}
\item $-2 (H_n + 2 \ln 2 + 1)$,
\end{itemize}
\vspace{1mm}
where $H^{(r)}_n := \sum_{j = 1}^n j^{-r}$ denotes the $n$th harmonic number
of order~$r$ and $H_n := H^{(1)}_n$.  Summing the residue contributions gives
an alternative exact formula for $\EE\,B_n$, from which the asymptotic
expansion~\eqref{fixasy} (as well as higher-order terms) can be read off
easily using standard asymptotics for $H^{(r)}_n$ and Stirling's formula; we
omit the 
details.

This completes the proof of \refT{T:fixunif}.

\begin{Remark}\rm
\label{R:keyrice}
We can calculate $\EE\,K_n$ in the same fashion (and somewhat more easily), by
replacing the bit-index function~$b$ by the constant function~$1$.  Following
this approach, we obtain first the following analogue
of~\eqref{fixexact1again}: 
$$
\EE\,K_n = 2 \sum_{k = 2}^n (-1)^k \binom{n}{k} \frac{1}{(k - 1) k}. 
$$
Then the residue contributions using Rice's method are
\begin{itemize}
\item $2 n (H_n - 2 - \frac{1}{n})$, at the double pole at $z = 1$;
\item $2 (H_n + 1)$, at the double pole at $z = 0$.
\end{itemize}
Summing the two contributions gives an alternative derivation
of~\eqref{fixkeyexact}.
\end{Remark}

\section{Poissonized model for uniform draws}
\label{S:trans}
As a warm-up for \refS{S:Poif}, we now suppose that the number of keys
(throughout this section still assumed to be uniformly distributed) is Poisson
with mean~$\gl$.  

\subsection{Key comparisons}
\label{S:key}

We begin with a lemma which provides both the analogue of
\eqref{fixkeyexact}--\eqref{fixkeyasy} and two other facts we will need in
\refS{S:Poif}.

\begl \label{L:Poikey}
In the setting of \refT{T:Poif} with~$F$ uniform, the 
expected number of 
key comparisons is a strictly convex function of~$\gl$ given by 
\begin{align*}
\EE\,K(\gl) &= 2 \int^{\gl}_0\!(\gl - y) (e^{-y} - 1 + y)
y^{-2}\,dy.
\end{align*}
Asymptotically,  as $\gl \to \infty$ we have
\begin{equation}
\label{Poikeyasy}
\EE\,K(\gl) 
= 2 \gl \ln \gl - (4 - 2 \gamma) \gl + 2 \ln \gl + 2 \gamma + 2 + 
O(e^{-\gl} \gl^{-2})
\end{equation}				  
and as $\gl \to 0$ we have
\begin{equation}
\label{smallgl} 
\EE\,K(\gl) = \sfrac{1}{2} \gl^2 + O(\gl^3).
\end{equation}
\enl
\noindent
Comparing the $n \to \infty$ expansion~\eqref{fixkeyasy}
with the corresponding expansion for Poisson($\gl$) many keys,
note the difference in constant terms and the much smaller error term in the
Poisson case.

\begin{proof}
To obtain the exact formula, begin with
$$
\EE\,K_n = \int^1_0\!\int^1_x\!p_n(x, y)\,dy\,dx;
$$
\cf~\eqref{ebnpn} and recall \refR{R:keyrice}.  Then multiply both sides by
$e^{-\gl} \gl^n / n!$ and sum, using the middle expression in \eqref{pnsum}; we
omit the simple computation.
Strict convexity then follows from the calculation $\frac{d^2}{d \gl^2}
\EE\,K(\gl) = 2 (e^{-\gl} - 1 + \gl) / \gl^2 > 0$, and
asymptotics as $\gl \to 0$ are trivial:\ $\EE\,K(\gl) = 2 \int^{\gl}_0\!(\gl
- y) [\frac{1}{2} + O(y)]\,dy = \frac{1}{2} \gl^2 + O(\gl^3)$.

To derive the result for $\gl \to \infty$, letting ${\bf 1}[A]$ denote~$1$ if~$A$ holds
and~$0$ otherwise, we observe 
\begin{align*}
\lefteqn{\hspace{-.1in}\sfrac{1}{2} \EE\,K(\gl)} \\ 
  &= \gl \int^{\infty}_0\!\!\left( e^{-y} - 1 + y {\bf 1}[y < 1] \right) y^{-2}\,dy
            - \gl \int^{\infty}_{\gl}\!(e^{-y} - 1) y^{-2}\,dy + \gl \int^{\gl}_1\!y^{-1}\,dy \\ 
  & \qquad  - \int^{\infty}_0 \left( e^{-y} - {\bf 1}[y < 1] \right) y^{-1}\,dy
           + \int^{\infty}_{\gl}\!e^{-y} y^{-1}\,dy + \int^{\gl}_1\!y^{-1}\,dy - \int^{\gl}_0\,dy \\
  &= -\gl (1 - \gamma) + \left[ 1 - \gl \int^{\infty}_{\gl}\!e^{-y} y^{-2}\,dy \right] + \gl \ln \gl \\
  & \qquad + \gamma + \int^{\infty}_{\gl}\!e^{-y} y^{-1}\,dy + \ln \gl - \gl \\
  &= \gl \ln \gl - (2 - \gamma) \gl + \ln \gl + \gamma + 1 + 
O(e^{-\gl} \gl^{-2}),
\end{align*}
as desired.  The calculations
\begin{align}
\label{firstcalc}
\int^{\infty}_0 \left( e^{-y} - {\bf 1}[y < 1] \right) y^{-1}\,dy &= -\gamma, 
\\
\label{secondcalc}
\int^{\infty}_0\!\!\left( e^{-y} - 1 + y {\bf 1}[y < 1] \right) y^{-2}\,dy 
&= - (1 - \gamma),
\\
\label{thirdcalc}
\int^{\infty}_\gl e^{-y} y^{-1}\,dy &= e^{-\gl} \gl^{-1}+O(e^{-\gl} \gl^{-2}),
\\
\label{fourthcalc}
\int^{\infty}_\gl e^{-y} y^{-2}\,dy &= e^{-\gl} \gl^{-2}+O(e^{-\gl} \gl^{-3}),
\end{align}
used at the second and third 
equalities are justified in Appendix~\ref{S:appendixA}.
\end{proof}

\begin{Remark}\label{R:error}\rm
The error term in \eqref{Poikeyasy} can, using \refL{LA2}, be refined to an
asymptotic expansion.  Indeed, for any $M \ge 1$ it can be written as
 \begin{equation*}
   e^{-\gl}\sum_{k=1}^{M - 1} (-1)^{k + 1} k\cdot k!\,\gl^{-k-1}+O(e^{-\gl}\gl^{-M-1}).
 \end{equation*}
\end{Remark}

To handle the number of bit comparisons, we will also need the following bounds
on the moments of $K(\gl)$.  Together with \refL{L:Poikey}, these bounds also
establish concentration of $K(\gl)$ about its mean when~$\gl$ is large.
For real $1 \leq p < \infty$, we let $\| W \|_p := \left( \EE\,|W|^p
\right)^{1 / p}$ denote $L^p$-norm and use $\EE(W;A)$ as shorthand for the
expectation of the product of~$W$ and the indicator of the event~$A$.

\begl \label{L:Poikeymoments}
For every real $p \geq 1$, there exists a constant $c_p < \infty$ such that
\begin{align*}
&\| K(\gl) - \EE\,K(\gl) \|_p \leq c_p \gl && \text{for $\gl \geq 1$},
\\
&\| K(\gl) \|_p \leq c_p \gl^{2 / p}&&\text{for $\gl \leq 1$}.
\end{align*}
In particular, $\Var K(\gl) \le c_2^2\gl^2$ for all $\gl>0$.
\enl

\begin{proof}
We use the notation
of \refT{T:Poif} with~$F$ uniform [so that $K(\gl) = K_N$ with~$N$ distributed Poisson$(\gl)$]
and write $\kappa_n := \EE\,K_n$ for $n \geq 0$.

(a)~The first result is certainly true for 
$\mbox{$\gl \geq 1$}$ bounded away from~$\infty$.
For $\gl \to \infty$ the result can be established by 
Poissonizing standard {\tt Quicksort} moment
calculations, as we now sketch.
(Although the following argument is valid for all $p\ge1$, the reader that
so prefers may assume that $p$ is an even integer.)
We start with
\begin{equation}
\label{twoterms}
\| K(\gl) - \EE\,K(\gl) \|_p \leq \| K_N - \kappa_N \|_p + \| \kappa_N - \EE\,K(\gl) \|_p
\end{equation} 
and proceed to argue that the first term on the right is asymptotically linear in~$\gl$
while the second term is $o(\gl)$.  

To handle the first term, observe that
$$
\| K_N - \kappa_N \|^p_p = \EE |K_N - \kappa_N|^p = \EE\,\EE [|K_N - \kappa_N|^p\,|\,N].
$$
But
$$
\EE [|K_N - \kappa_N|^p\,|\,N = n] = \EE |K_n - \kappa_n|^p;
$$
by the comments at the very end of \refS{S:fixkey} this equals
$(1 + o(1)) \left( \EE\,|\widehat{K}|^p \right) n^p$ as $n \to \infty$
and so can be bounded for all~$n$ by a constant times $n^p$.
Thus one need only observe that $\EE\,N^p = (1 + o(1)) \gl^p$
as $\gl \to \infty$ to complete treatment of the first term on the right
in~\eqref{twoterms}. 

To treat the second term in RHS\eqref{twoterms} as $\gl \to \infty$, 
one can show using~\eqref{fixkeyasy} and~\eqref{Poikeyasy} 
and the normal approximation to the Poisson that
$$
\| \kappa_N - \EE\,K(\gl) \|_p = (1 + o(1))\,2 \| N \ln N - \gl \ln \gl \|_p = (1 + o(1)) 2 \|Z\|_p\,\gl^{1/2} \ln \gl = o(\gl)
$$
where~$Z$ has the standard normal distribution.  We omit the details.
 
(b)~For $\gl \leq 1$ we use
\begin{align*}
\EE\,K^p(\gl) 
&\leq \EE \left[ {\binom N 2}^p; N \geq 2 \right] \leq
\EE\,[N^{2 p}; N \geq 2]
= \gl^2 \sum_{n = 2}^{\infty} e^{-\gl} \frac{\gl^{n - 2}}{n!}
n^{2 p} \leq c^p_p \gl^2, 
\end{align*}
provided~$c_p$ is taken to be at least the
finite value
$ \left[ \sum_{n = 2}^{\infty} (n^{2 p} / n!) \right]^{1 / p}$.
\end{proof}

\subsection{Bit comparisons}
\label{S:bit}

We now turn our attention from~$K(\gl)$ to the more interesting random
variable~$B(\gl)$, the total number of bit comparisons. We 
discuss first asymptotics for the mean $\mu_{\unif}(\gl)$ and then the
variability of $B(\gl)$ about 
the mean.  In our next proposition we will derive the asymptotic
estimate~\eqref{Poiasy} by applying standard 
asymptotic techniques to the exact formula~\eqref{Poiexact}.

\begp
\label{P:Poiasy}
Asymptotically as $\gl \to \infty$, we have
$$
\mu_{\unif}(\gl) = \EE\,B({\gl}) = \gl (\ln \gl) (\lg \gl) - c_1 \gl \ln \gl + c_2 \gl + \pi_{\gl} \gl + O(\log \gl).
$$
\enp

\begin{proof}[Proof (outline)]
Recalling~\eqref{Poiexact} and noting that for $x > 0$ we have
$$
\sum_{k = 2}^{\infty} (-1)^k \frac{x^k}{k! (k - 1) k} = \int^x_0\!\int^w_0\!v^{-2} (e^{-v} - 1 + v)\,dv\,dw =: g(x),
$$
it follows that $\mu(\gl) \equiv \mu_{\unif}(\gl)$ has the harmonic sum form
$$
\mu(\gl) = 2 \sum_{j = 0}^{\infty} 2^j g(2^{-j} \gl),
$$
rendering it amenable to treatment by Mellin transforms, 
see, e.g.,\ \cite{FGD} or \cite{FS}.
Indeed, it follows immediately that the Mellin transform $\mu^*$ of~$\mu$ is given for~$s$ in the fundamental strip
$\{s \in {\mathbb C}:-2 < \Re s < -1\}$ by
$$
\mu^*(s) = 2 g^*(s) \gL(s)
$$
in terms of the Mellin transform $g^*$ of~$g$ and the generalized Dirichlet series
$$
\gL(s) = \sum_{j = 0}^{\infty} 2^{j (s + 1)} = \frac{1}{1 - 2^{s + 1}}.
$$
But it's also easy to check using the integral formula for~$g$ that
$$
g^*(s) = \frac{\Gamma(s)}{(s + 1) s},
$$
and 
so 
$$
\mu^*(s) = \frac{2\Gamma(s)}{(s + 1) s (1 - 2^{s + 1})}.
$$
The desired asymptotic expansion for $\mu(\gl)$ (including the remainder term) can then be read off from the singular behavior of 
$\mu^*(s)$ at its poles located at $s = -1$ (triple pole), $s = -1 - i \beta k$ for $k \in \ZZ \setminus \{0\}$ (simple poles), and $s = 0$ (double pole), paralleling the use of Rice's method for $\EE\,B_n$ in \refS{S:fixasy}.
\end{proof}

In order to move beyond the mean of $B(\gl)$, we define
\begin{align*}
I_{k, j}
 &:= [(j - 1) 2^{-k}, j 2^{-k})
\\
\intertext{to be the $j$th dyadic rational interval of rank~$k$, and consider} 
B_{k}(\gl)
 &:= \mbox{number of comparisons of $(k + 1)$st bits}, \\
B_{k, j}(\gl)
 &:= \text{number of comparisons of $(k + 1)$st bits
between keys in $I_{k, j}$}. 
\end{align*}
Observe that
\begin{equation}
\label{bdecomp}
B(\gl) = \sum_{k = 0}^{\infty} B_k(\gl) 
= \sum_{k = 0}^{\infty} \sum_{j = 1}^{2^k}
B_{k, j}(\gl).
\end{equation}
A simplification provided by our Poissonization is that, for each
fixed~$k$, the 
variables $B_{k, j}(\gl)$ are independent.  Further, the marginal
distribution of 
$B_{k, j}(\gl)$ is simply that of $K(2^{-k} \gl)$.  

\begr
Taking expectations
in~\eqref{bdecomp}, we find
\begin{equation}
\label{Poi_bit_vs_key}
\mu_{\unif}(\gl) = \EE\,B(\gl) = \sum_{k = 0}^{\infty} 2^k\,
\EE\,K(2^{-k} \gl).
\end{equation}
If one is satisfied with a remainder of $O(\gl)$ rather than $O(\log \gl)$,
then~\refP{P:Poiasy} can also be proved by means of~\eqref{Poi_bit_vs_key}.
This is done by splitting the sum $\sum_{k = 0}^{\infty}$ there into $\sum_{k = 0}^{\lf \lg \gl \rf}$
and $\sum_{k = \lf \lg \gl \rf + 1}^{\infty}$ and utilizing~\eqref{Poikeyasy} (to the needed order) for the first sum
and~\eqref{smallgl} [or rather the simpler $\EE\,K(\gl) = O(\gl^2)$ as 
$\gl\to 0$] for the second.  We omit the details.
(See also \refS{S:Poif} where this argument is used in a more general
situation as part of the proof of \refT{T:Poif}.)
\enr

Moreover,
we are now in position to establish the concentration of $B(\gl)$
about $\mu_{\unif}(\gl)$
promised just prior to~\eqref{bitsperkey}.

\begp
\label{P:varbound}
There exists a constant~$c$ such that $\Var\,B(\gl) \leq c^2 \gl^2$ for $0 < \gl < \infty$.
\enp

\begin{proof}
For $0 < \gl < \infty$, we have
by \eqref{bdecomp},
the triangle inequality for $\|\cdot\|_2$,
independence and $B_{k, j}(\gl) \Leq  K(2^{-k} \gl)$,
and
\refL{L:Poikeymoments}, 
with $c := c_2  \sum_{k = 0}^{\infty} 2^{- k / 2}$,
\begin{equation*}
[\Var B(\gl)]^{1 / 2} \leq \sum_{k = 0}^{\infty} [\Var\,B_k(\gl)]^{1 / 2}
 \leq \sum_{k = 0}^{\infty} [2^k \Var\,K(2^{-k} \gl)]^{1 / 2}
 \leq c \gl.
\end{equation*}
\vskip-2.2\baselineskip
\end{proof}
\bigskip

Our next proposition extends the previous one but is limited to $\gl \geq 1$.

\begp
\label{P:lpbound}
For any real $1 \leq p < \infty$, there exists a constant $c'_p < \infty$ such that
$$
\| B(\gl) - \EE\,B(\gl) \|_p \leq c'_p \gl \qquad \text{for $\gl \geq 1$}.
$$
\enp

\begin{proof}
Because $L^p$-norm is nondecreasing in~$p$, we may assume that $p \geq 2$.
The proof again starts with use of the triangle inequality for $\| \cdot
\|_p$: For $0 < \gl < \infty$ we have from~\eqref{bdecomp} that 
\begin{equation}
\label{normbreakdown}
\| B(\gl) - \EE\,B(\gl) \|_p \leq \sum_{k = 0}^{\infty} \| B_k(\gl) - \EE\,B_k(\gl) \|_p.
\end{equation}
Further, 
\begin{equation*}
 B_k(\gl)  - \EE  B_k(\gl) 
=  \sum_{j = 1}^{2^k} \bigbrack{B_{k, j}(\gl)-\EE B_{k, j}(\gl)},
\end{equation*}
where the summands are independent and centered, each with the same
distribution as $K(2^{-k}\gl)-\EE K(2^{-k}\gl)$.
Hence, by Rosenthal's inequality \cite[Theorem~3]{Rosenthal} (see also, e.g.,\ 
\cite[Theorem 3.9.1]{Gut}) and \refL{L:Poikeymoments},
\begin{equation*}
  \begin{split}
\| B_k(\gl)  - \EE  B_k(\gl) \|_p
&\le b_1 \lrpar{2^{k/p} \|{B_{k, j}(\gl)-\EE B_{k, j}(\gl)}\|_p
+2^{k/2}\|\bigpar{B_{k, j}(\gl)-\EE B_{k, j}(\gl)}\|_2}	
\\&
= b_1 2^{k/p} \|{K(2^{-k}\gl)-\EE K(2^{-k}\gl)}\|_p
+b_12^{k/2}\|{K(2^{-k}\gl)-\EE K(2^{-k}\gl)}\|_2
\\&
\le b_1 2^{k/p}c_p \bigpar{2(2^{-k}\gl)^{2/p}+2^{-k}\gl}
+b_12^{k/2}c_2 2^{-k}\gl
\\&
\le b_2 2^{-k/p}\gl^{2/p}+b_3 2^{-k/2}\gl
  \end{split}
\end{equation*}
for some constants $b_1$, $b_2$ and $b_3$ (depending on $p$).
Therefore, by~\eqref{normbreakdown},
\begin{equation*}
\| B(\gl)  - \EE  B(\gl) \|_p  
\le b'_2 \gl^{2/p}+b'_3 \gl
\le(b'_2+b'_3)\gl
\end{equation*}
when $\gl\ge1$.
\end{proof}

\begin{Remark}
For the (rather uninteresting) case $\gl\le1$, the same proof yields
$\| B(\gl)  - \EE  B(\gl) \|_p \le c_p' \gl^{2/p}$ for $p\ge2$. This inequality
actually holds (for some $c'_p$) for all $p\ge1$; the case $1\le p<2$ follows easily from
\eqref{bdecomp} and \refL{L:Poikeymoments}. 
\end{Remark}

\begr
\label{exactorder}
In~\cite{bfnondeg} it is shown (in a more general setting) that
the variables $B_k(\gl)$ are positively correlated, 
from which it is easy to check that
$\Var\,B(\gl) = \Omega(\gl^2)$ for $\gl \geq 1$.  We 
then have $\| B(\gl) - \EE\,B(\gl) \|_p = \Theta(\gl)$ for each
real $2 \leq p < \infty$.  In fact, it is even true that $[B(\gl) - \EE\,B(\gl)] / \gl$ has a nondegenerate 
limiting distribution:\ see~\cite{fjfeb2010}.
\enr

\section{Mean number of bit comparisons for keys drawn from an arbitrary
density~$f$}
\label{S:Poif}
In this section we outline martingale arguments for proving
\refT{T:Poif} for the 
expected number of bit comparisons for Poisson($\gl$) draws from a
rather general 
density~$f$.  (For background on martingales, see any standard
measure-theoretic 
probability text, \eg,~\cite{MR1796326}.)  In addition to
the notation above,
we will use the following:
\begin{align*}
p_{k, j}  &:= \int_{I_{k, j}}\!f, \\
f_{k, j}  &:= \mbox{(average value of~$f$ over $I_{k, j}$)} = 2^k p_{k, j}, \\
f_k(x)     &:= f_{k, j}\mbox{\ \ for all $x \in I_{k, j}$}, \\
f^*(\cdot) &:= \sup_k f_k(\cdot).
\end{align*}
Note for each $k \geq 0$ that $\sum_j p_{k, j} = 1$ and that $f_k:(0, 1) \to [0, \infty)$ 
is the smoothing of~$f$ to the rank-$k$ dyadic rational intervals. 
{}From basic martingale theory we have immediately the following simple but key
observation.

\begl
\label{L:doob}
With $f_{\infty} := f$,
$$
\text{$(f_k)_{0 \leq k \leq \infty}$ is a Doob's 
martingale,}
$$
and $f_k \to f$ almost surely (and in~$L^1$).
\enl

Our
proof of \refT{T:Poif} will also utilize the following technical lemma.

\begl
\label{L:fstar}
If (as assumed in \refT{T:Poif}) the probability density~$f$ on $(0, 1)$ satisfies
\mbox{$\int^1_0\!f (\ln_+ f)^4 < \infty$}, then
\begin{equation}
\label{fstar}
\int^1_0\!f^* (\ln_+ f^*)^3 < \infty.
\end{equation}
\enl

\begin{proof}
This follows readily by applying
one of the standard maximal inequalities for nonnegative submartingales
which asserts that for a nonnegative submartingale $(Y_k)_{1 \leq k < \infty}$
and 
$Y^* := \sup_{1 \leq k < \infty} Y_k$ we have
\begin{equation}\label{doob1}
\EE\,Y^* \leq \frac{e}{e - 1} \left[ 1 + \sup_{1 \leq k < \infty} \EE(Y_k
  \ln_+ Y_k) \right];
\end{equation}
see, e.g.,\ \cite[Theorem 10.9.4]{Gut}.
The process $(Y_k := f_k (\ln_+ f_k)^3)_{1 \leq k < \infty}$ is a submartingale
by \refL{L:doob} and the convexity of the function $x \to x (\ln_+ x)^3$, and for every $1 \leq k < \infty$
we have
$$
\int^1_0\!Y_k \ln_+ Y_k \leq 
4\int^1_0\!f_k (\ln_+ f_k)^4 \leq 4\int^1_0\!f (\ln_+ f)^4 < \infty,
$$
so \eqref{doob1} does indeed give the desired conclusion.
 \end{proof}

Before we begin the proof of \refT{T:Poif} we remark that the asymptotic
inequality $\mu_f(\gl) \geq \mu_{\unif}(\gl)$ observed
there in fact holds for every $0 < \gl < \infty$.  Indeed,
\begin{equation}
\label{f_vs_unif}
\begin{split}
\mu_f(\gl) 
&= \sum_{k = 0}^{\infty} \sum_{j = 1}^{2^k}
 \EE\,K(\gl p_{k, j}) \\
& \geq
\sum_{k = 0}^{\infty} 2^k \EE\,K(\gl 2^{-k}) 
= \mu_{\unif}(\gl),
\end{split}
\end{equation}
where the first equality appropriately generalizes~\eqref{Poi_bit_vs_key}, the
inequality follows by the convexity of $\EE\,K(\gl)$ (recall \refL{L:Poikey}),
and the second equality follows by~\eqref{Poi_bit_vs_key}.
Furthermore, strict inequality $\mu_f(\gl) > \mu_{\unif}(\gl)$ holds
unless $p_{k,j}=2^{-k}$ for all $k$ and $j$, i.e., unless the distribution
$F$ is uniform. (This argument is valid also if $F$ does not have a density.)
\smallskip

\begin{proof}[Proof of Theorem \ref{T:Poif}]
Assume  $\gl \geq 1$ and, with $m \equiv m(\gl) := \lc \lg \gl \rc$, split the
double sum in~\eqref{f_vs_unif} as
\vspace{-.2in}
\begin{equation}
\label{split}
\mu_f(\gl) = \sum_{k = 0}^m \sum_{j = 1}^{2^k} \EE\,K(\gl p_{k, j}) + R(\gl),
\end{equation}
with $R(\gl)$ a remainder term.  
Our first aim is to show that
$$
R(\gl) := \sum_{k = m + 1}^{\infty} \sum_{j = 1}^{2^k}
 \EE\,K(\gl p_{k, j}) = O(\gl).
$$

Since $\EE\,K(\cdot)$ is nondecreasing, we have the inequality
\begin{align*}
\EE\,K(\gl p_{k, j}) 
  &\leq \sum_{n = - \infty}^{\infty} \EE\,K(2^{n + 1})\,{\bf 1}[2^n \leq \gl p_{k, j} < 2^{n + 1}] \\
  &\leq \sum_{n = - \infty}^{\infty} 2^{-n}\,\EE\,K(2^{n + 1})\,\gl p_{k, j}\,{\bf 1}[\gl p_{k, j} \geq 2^n].
\end{align*}
Now if $\gl p_{k, j} \geq 2^n$, then for $x \in I_{k, j}$ we have
$$
f^*(x) \geq f_k(x) = 2^k\,p_{k, j} \geq 2^k \gl^{-1} 2^n \geq 2^{k - m + n}.
$$
Hence
\begin{align*}
\EE\,K(\gl p_{k, j}) 
  &\leq \sum_{n = - \infty}^{\infty} 2^{-n}\,\EE\,K(2^{n + 1})\,\gl p_{k, j}\,{\bf 1}[\gl p_{k, j} \geq 2^n] \\
  &\leq \gl \sum_{n = - \infty}^{\infty} 2^{-n}\,\EE\,K(2^{n + 1})\,\int_{I_{k, j}}\!f_k(x)\,{\bf 1}[2^{k - m + n} \leq f^*(x)]\,dx
\end{align*}
and therefore
\begin{align*}
\sum_{j = 1}^{2^k} \EE\,K(\gl p_{k, j}) 
  &\leq \gl \sum_{n = - \infty}^{\infty} 2^{-n}\,\EE\,K(2^{n + 1})\,\int^1_0\!f_k(x)\,{\bf 1}[2^{k - m + n} \leq f^*(x)]\,dx \\
  &\leq \gl \int^1_0\!f^*(x) \sum_{n = - \infty}^{\infty} 2^{-n}\,\EE\,K(2^{n + 1})\,{\bf 1}[2^{k - m + n} \leq f^*(x)]\,dx.
\end{align*}
From this we conclude
\begin{align*}
R(\gl) 
  &\leq \gl \int^1_0\!f^*(x) \sum_{n = - \infty}^{\infty} 2^{-n}\,\EE\,K(2^{n + 1}) \sum_{k = 1}^{\infty} {\bf 1}[2^{k  + n} \leq f^*(x)]\,dx \\
  &= \gl \int^1_0\!f^*(x) \sum_{k = 1}^{\infty} \sum_{n = - \infty}^{\nu(x, k)} 2^{-n}\,\EE\,K(2^{n + 1})\,dx,
\end{align*}
with $\nu(x, k) := \lf \lg f^*(x) \rf - k$.   We proceed to bound the sum on~$n$ here.  If $\nu \leq 0$, then using the bound of (constant times $\gl^2$) on $\EE\,K(\gl)$ from \refL{L:Poikey} we can bound the sum $\sum_{n  \leq \nu} 2^{-n}\,\EE\,K(2^{n + 1})$ by a constant (say, $b'$) times $2^{\nu}$, while if $\nu > 0$ we can again use the estimates from \refL{L:Poikey} to bound, for some constants $b_1, b_2, b''$ the same sum by
$$
b_1 + \sum_{n = 1}^{\nu} 2^{-n}\,b_2\,(n + 1) 2^{n + 1} \leq b'' \nu^2.
$$
Therefore, for another constant~$b$ we have
\begin{align*}
\sum_{k = 1}^{\infty} \sum_{n = - \infty}^{\nu(x, k)} 2^{-n}\,\EE\,K(2^{n + 1}) 
  &\leq \sum_{k = 1}^{\lf \lg f^*(x) \rf - 1} b'' \nu^2(x, k) + \sum_{k = \lf \lg f^*(x) \rf}^{\infty} b' 2^{\nu(x, k)} \\ 
  &\leq \frac{b''}{3 (\ln 2)^3} [\ln_+ f^*(x)]^3 + 2 b' \leq b \left( 1 + [\ln_+ f^*(x)]^3 \right).
\end{align*}
Using \refL{L:fstar} we finally conclude
$$
R(\gl) \leq b\,\gl \int^1_0\!f^* [1 + (\ln_+ f^*)^3] = O(\gl).
$$ 

Plugging $R(\gl) = O(\gl)$ and the consequence
$$
\EE\,K(x) = 2 x \ln x - (4 - 2 \gamma) x  + O(x^{1 / 2}),
$$
which holds uniformly in $0 \leq x < \infty$,
of \refL{L:Poikey} into~\eqref{split}, we find
\begin{align*}
\mu_f(\gl)
 &=\sum_{k = 0}^m \sum_{j = 1}^{2^k} \Bigl[ 2 \gl p_{k, j} (\ln \gl
 + \ln p_{k, 
       j}) 
- (4 - 2 \gamma) \gl p_{k, j} 
+ O\left( \left( \gl p_{k, j}
       \right)^{1 / 2} \right) \Bigr] + O(\gl) \\
 &= \sum_{k = 0}^m \biggl[ 2 \gl \ln \gl + 2  \gl \sum_{j = 1}^{2^k} p_{k,
       j} \ln p_{k, j} 
- (4 - 2 \gamma) \gl 
+ O\left( \gl^{1
 / 2} 2^{k / 2} \right) \biggr] + O(\gl) \\
 &= \mu_{\unif}(\gl) + 2 \gl \sum_{k = 0}^m \int\!f_k \ln
       f_k + O(\gl),
\end{align*}
where we have used the Cauchy--Schwarz inequality at the second equality
and comparison with the uniform case ($f \equiv 1$) at the third. 

But, by \refL{L:doob}, \eqref{fstar}, and the dominated convergence theorem,
\begin{equation}
\label{dct}
\int\!f_k \ln f_k \longrightarrow \int\!f \ln f\mbox{\ as $k \to \infty$},
\end{equation}
from which follows
\begin{equation*}
  \begin{split}
\mu_f(\gl) 
&= \mu_{\unif}(\gl) + 2 \gl (\lg \gl) \int\!f \ln
f + o(\gl \log \gl) 
\\
&= \mu_{\unif}(\gl) + 2 \gl (\ln \gl)
\int\!f \lg f + o(\gl \log \gl),	
\end{split}
\end{equation*}
as desired. 
\end{proof}

\begr
\label{R:holder}
If we make the stronger assumption that
$$
\mbox{$f$ is \Holder($\alpha$) continuous on $[0, 1]$ for some $\ga > 0$,}
$$
then we can quantify~\eqref{dct} and improve the $o(\gl \log \gl)$ remainder in
the statement of \refT{T:Poif} to $O(\gl)$.  
A  proof is provided in Appendix~\ref{S:appendixB}.
\enr

\section{An improvement:\ BitsQuick}
\label{S:quicker}
Recall the operation of {\tt Quicksort} described in \refS{S:fixkey}.
Suppose that the pivot [call it $x = 0.x(1)\,x(2)\,\dots$] has its first $m_1$ bits $x(1), x(2), \dots, x(m_1)$ all equal to~$0$.  
Then the subarray of keys smaller than~$x$ all have length-$m_1$ prefix consisting of all $0$s as well, 
and it wastes time to compare these known bits when {\tt Quicksort} is called recursively on this subarray.

\begin{table}[b]
\begin{tabular}{|l|}
\hline
{\bf The routine BitsQuick}$(A, m)$ \\
\\ 
{\bf If} $|A| \leq 1$ \\
\tab {\bf Return} $A$ \\
{\bf Else} \\
\tab {\bf Set} $A_- \leftarrow \emptyset$ and $A_+ \leftarrow \emptyset$ \\ 
\tab {\bf Choose} a random pivot key~$x = 0.x(1)\,x(2)\,\dots$ from~$A$ \\
\tab {\bf If} $x(1) = 0$ \\
\tab \tab {\bf Set} $m_1 \leftarrow 1$ \\
\tab \tab {\bf While} $x(m_1 + 1) = 0$ \\
\tab \tab \tab {\bf Set} $m_1 \leftarrow m_1 + 1$ \\
\tab \tab {\bf For} $y \in A$ with $y \neq x$ \\
\tab \tab \tab {\bf If} $y < x$ \\
\tab \tab \tab \tab {\bf Set} $y \leftarrow L^{m_1}(y)$ and then $A_- \leftarrow A_-
                      \cup \{y\}$ \\
\tab \tab \tab {\bf Else} \\
\tab \tab \tab \tab {\bf Set} $A_+ \leftarrow A_+ \cup \{y\}$ \\
\tab \tab {\bf Set} $A_- \leftarrow$ {\tt BitsQuick}$(A_-, m_1)$ and \\
\tab\tab\phantom{{\bf Set}}
$A_+ \leftarrow$ {\tt BitsQuick}$(A_+, 0)$ \\
\tab \tab {\bf Set} $A \leftarrow A_- \parallel \{x\} \parallel A_+$ \\
\tab {\bf Else} \\
\tab \tab {\bf While} $x(m_1 + 1) = 1$ \\
\tab \tab \tab {\bf Set} $m_1 \leftarrow m_1 + 1$ \\
\tab \tab {\bf For} $y \in A$ with $y \neq x$ \\
\tab \tab \tab {\bf If} $y < x$ \\
\tab \tab \tab \tab {\bf Set} $A_- \leftarrow A_- \cup \{y\}$ \\
\tab \tab \tab {\bf Else} \\
\tab \tab \tab \tab {\bf Set} $y \leftarrow L^{m_1}(y)$ and then $A_+ \leftarrow A_+
                      \cup \{y\}$ \\
\tab \tab {\bf Set} $A_- \leftarrow$ {\tt BitsQuick}$(A_-, 0)$ and 
\\
\tab\tab\phantom{{\bf Set}}
$A_+ \leftarrow$ {\tt BitsQuick}$(A_+, m_1)$ \\
\tab \tab {\bf Set} $A \leftarrow A_- \parallel \{x\} \parallel A_+$ \\
\tab {\bf Return} $R^m(A)$ \\  
\hline
\end{tabular}
\end{table}

We call
{\tt BitsQuick}
the obvious recursive algorithm that does away with this waste.  
We give one possible implementation
in the boxed pseudocode, which calls for some explanation. 
The initial call to the routine {\tt BitsQuick}$(A, m)$ is to
{\tt BitsQuick}$(A_0, 0)$, where $A_0$ is the full array to be sorted;
in general, the routine {\tt BitsQuick}$(A, m)$ in essence sorts a subarray~$A$ of $A_0$
in which every element has (and is known to have) the same prefix of length~$m$

There, for $m_1 = 0, 1, \dots$, we use
the notation $L^{m_1}(y)$ for the result of rotating to the left $m_1$ bits the register
containing key~$y$---\ie, replacing 
$y = .y(1)\,y(2)\,\dots$
by $.y(m_1~+~1)\,y(m_1~+~2)\,\dots$.
The input~$m$ indicates how many bits each element of the array~$A$ 
needs to be rotated to the right before the routine terminates,  
and $R^m(A)$ (in the last line of the pseudocode) is the resulting array
after these right-rotations.
The symbol~$\parallel$ denotes concatenation (of sorted arrays).
(We omit minor implementational details, such
as how to do sorting in place and to maintain random ordering for the generated
subarrays, that are the same as for {\tt Quicksort} and very well known.)  The routine
{\tt BitsQuick}$(A, m)$ returns the sorted version of~$A$.  

A related but somewhat more complicated algorithm 
has been considered by 
Roura \cite[Section 5]{Roura}.

The following theorem is the analogue for {\tt BitsQuick} of \refT{T:fixunif}.

\begin{theorem}\label{T:BQ}
If the keys $X_1, \dots, X_n$ are independent and uniformly distributed on
$(0, 1)$, then the number~$Q_n$ of bit comparisons required to sort these
keys using {\tt BitsQuick} has expectation given by the following exact and
asymptotic expressions: 
\begin{align*}
\EE\,Q_n
 &=
\sum_{k = 2}^n (-1)^k \binom{n}{k} k^{-1} \left[ \frac{2 (k -
     2)}{1 - 2^{-k}} - \frac{k - 4}{1 - 2^{- (k - 1)}} \right] 
+ 2 n H_n - 5 n +  2 H_n + 1 \\
 &=
\Bigl( 2 + \frac{3}{\ln 2} \Bigr) n \ln n - \ct_1 n  +
       \tilde{\pi}_n n + O(\log^2 n),
\end{align*}
where, with $\beta := 2 \pi / \ln 2$ as before,
$$
\ct_1 := \frac{7}{\ln 2} + \frac{15}{2} - \Bigpar{\frac{3}{\ln 2}+2}\gamma \doteq 13.9
$$ 
and
$$
\tilde{\pi}_n :=
\frac{1}{\ln 2} \sum_{k \in \ZZ:\,k \neq 0} \frac{3 - i \beta k}{1 + i \beta k}
\Gamma(-1 - i \beta k)\, n^{i \beta k}
$$ 
is periodic in $\lg n$ with
period~$1$ and amplitude
smaller than $2 \times 10^{-7}$.
\end{theorem}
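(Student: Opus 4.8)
The plan is to mirror the development for {\tt Quicksort} in Sections~\ref{S:fixexact1}--\ref{S:fixasy}, tracking the effect of the bit-rotations performed by {\tt BitsQuick}. First I would set up the exact formula. As in \refS{S:fixexact1}, write $\EE\,Q_n = \sum_{1 \leq i < j \leq n} \PP(C_{ij})\,\EE[\,q(X_{(i)},X_{(j)})\mid C_{ij}\,]$, where now $q(x,y)$ counts the bit comparisons that {\tt BitsQuick} actually performs on the pair $(x,y)$ given that they are compared. The key structural observation is that when keys $x<y$ are compared, they lie in a common subarray all of whose elements share a known prefix of some length $m$; the leading $m$ bits of $x$ and $y$ agree with that prefix and are \emph{not} recompared. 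Hence $q(x,y) = b(x,y) - (\text{length of the maximal common dyadic prefix that has already been ``used up'' by an earlier pivot in the recursion})$. The clean way to handle this is to observe, exactly as for the event $C_{ij}$, that whether the first bit on which $x$ and $y$ differ is ``charged'' depends only on the relative order of pivots, and to compute $\EE[q(X_{(i)},X_{(j)})\mid C_{ij}]$ by integrating against the density $g_{n,i,j}$; after summing over $i<j$ with the weights $\PP(C_{ij})$ one again gets an integral $\int_0^1\!\int_x^1 \tilde b(x,y)\,p_n(x,y)\,dy\,dx$ against the same kernel $p_n$ from~\eqref{pnsum}, but with $b$ replaced by an adjusted bit-index function $\tilde b$. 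Expanding $p_n$ as the alternating binomial sum $2\sum_{k=2}^n(-1)^k\binom{n}{k}(y-x)^{k-2}$ and carrying out the (lengthier than in \refS{S:fixexact1}, but still elementary) dyadic integrals $\int\!\!\int \tilde b(x,y)(y-x)^{k-2}$ yields the stated exact expression; the extra terms $2nH_n - 5n + 2H_n + 1$ arise because a portion of the accounting reduces to a key-comparison-type sum $\sum_k(-1)^k\binom{n}{k}/[(k-1)k]$ of the kind evaluated in \refR{R:keyrice}.

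The second half is the asymptotic expansion, obtained from the exact formula by Rice's method exactly as in \refS{S:fixasy}. I would introduce
$$
\tilde h(z) := z^{-1}\Bigl[\frac{2(z-2)}{1-2^{-z}} - \frac{z-4}{1-2^{-(z-1)}}\Bigr],
$$
and identify $\EE\,Q_n$ (up to the polynomial-in-$H_n$ correction already in closed form) with the sum of residues of $B(n+1,-z)\tilde h(z)$ at the triple pole $z=1$, the simple poles $z=1+i\beta k$ for $k\in\ZZ\setminus\{0\}$ coming from the zeros of $1-2^{-(z-1)}$, and a double pole at $z=0$. The residue at $z=1$ contributes the $(2+\tfrac{3}{\ln 2})n\ln n - \tilde c_1 n$ main terms (using standard asymptotics $H_{n-1} = \ln n + \gamma + O(1/n)$, $H^{(2)}_{n-1} = O(1)$, and Stirling for $n!/\Gamma(n-i\beta k)$); the simple poles assemble into the periodic function $\tilde\pi_n$, with the factor $\tfrac{3-i\beta k}{1+i\beta k}$ emerging from evaluating $\tilde h$ near $z=1+i\beta k$ after cancellation of the two terms' singular parts; and the pole at $z=0$, together with the closed-form correction, produces the $O(\log^2 n)$ (indeed a $-c\log n + O(1)$) remainder. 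The amplitude bound on $\tilde\pi_n$ follows from the rapid decay of $|\Gamma(-1-i\beta k)|$ as $|k|\to\infty$, summed numerically, just as for $\pi_n$ in \refT{T:fixunif}.

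The main obstacle is the combinatorial bookkeeping in the first step: correctly identifying, for a pair of keys $X_{(i)}<X_{(j)}$ that are eventually compared, exactly which of their agreeing leading bits have been ``used up'' by the prefixes of earlier pivots, and showing that the resulting expectation again factors through the simple kernel $p_n(x,y)=p_n(y-x)$. Once one sees that the only pivots that can shorten the comparison of $X_{(i)}$ and $X_{(j)}$ are those chosen earlier from among $X_{(i)},\dots,X_{(j)}$ — and that the probabilistic weight of each such scenario is again an elementary function of $j-i$ of the same type encountered in \refS{S:fixexact1} — the rest is a (tedious but routine) evaluation of dyadic integrals and a by-now-standard Rice's-method residue computation, best left to a symbolic algebra package. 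I would therefore present the $\tilde b$-reduction and the statement of the dyadic integral evaluations in detail, and then, as in \refS{S:fixasy}, list the residues and remark that the asymptotic expansion is read off in the standard way, omitting the mechanical details.
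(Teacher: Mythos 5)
Your proposed reduction to the kernel $p_n$ does not work, and this is the crux of the argument, not a routine detail. The savings that {\tt BitsQuick} achieves on the comparison of a pair $(X_{(i)}, X_{(j)})$ is \emph{not} a function of $X_{(i)}$ and $X_{(j)}$ alone: it equals the length of the common prefix that is already known for the current subarray, and that length is determined by the keys $X_{(L-1)}$ and $X_{(R+1)}$ \emph{bounding} the subarray from outside — not by the two keys being compared. Consequently there is no adjusted bit-index function $\tilde b(x,y)$ for which $\EE\,Q_n = \int_0^1\!\int_x^1 \tilde b(x,y)\,p_n(x,y)\,dy\,dx$, and the sentence ``the only pivots that can shorten the comparison of $X_{(i)}$ and $X_{(j)}$ are those chosen earlier from among $X_{(i)},\dots,X_{(j)}$'' is backwards: the relevant pivots are precisely the two most recent ones chosen from \emph{outside} the current subarray, one on each side.

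What the paper actually does is compute the savings $\EE\,B_n - \EE\,Q_n$ pivot-by-pivot. For the pivot $X_{(i)}$ with (random) subarray $[L(i),R(i)]$, the savings contribution is $(R-L)\,[b(X_{(L-1)}, X_{(R+1)}) - 1]$. Summing over $i$, reindexing by $(l,r)$, and using independence of pivot choices from order-statistic values yields a sum of the form $\sum_{(l,r)} (r-l)\,\EE[b(X_{(l-1)},X_{(r+1)})-1]\cdot \EE\bigl|\{i: (L(i),R(i))=(l,r)\}\bigr|$. This splits into an interior contribution $D_n$, which \emph{does} ultimately integrate against a kernel that is a function of $y-x$, but with weights $\frac{j-i-2}{(j-i)(j-i+1)}$ rather than $\PP(C_{ij})=\frac{2}{j-i+1}$ — giving an alternating binomial sum with coefficients $(k-3)(k-2)/[(k-1)k]$ instead of the $1/[(k-1)k]$ appearing in $p_n$ — and a boundary contribution $E_n$ involving $\EE\,b(0,X_{(j)})$, which is a one-dimensional integral with no analogue in your setup. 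Your proposal accounts for neither the changed weights nor this boundary term, and the closed-form piece $2nH_n-5n+2H_n+1$ is exactly the $q_n$ normalization that appears when the ``$-1$'' in $b(\cdot,\cdot)-1$ is summed against these same weights, not a leftover from a $\sum (-1)^k\binom nk/[(k-1)k]$ evaluation as you suggest. The Rice's-method endgame you describe is fine and matches the paper, but it cannot be reached until the exact formula is derived correctly, which requires the pivot-by-pivot decomposition rather than the pair-by-pair one.
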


\begin{proof}
We establish only the exact expression; the asymptotic expression can be derived from it
using Rice's method, just as we outlined for $\EE\,B_n$ in \refS{S:fixasy}.  Further, in light
of the exact expression~\eqref{fixexact1} for $\EE\,B_n$, we need
only show that the \emph{expected savings} $\EE\,B_n - \EE\,Q_n$ enjoyed by {\tt BitsQuick} relative
to {\tt Quicksort} is given by the expression
\begin{align}
\label{savings}
\EE\,B_n - \EE\,Q_n 
  &= \sum_{k = 2}^n (-1)^k \binom{n}{k} k^{-1} \left\{ \frac{- 2 (k - 2)}{1 - 2^{-k}} + \frac{(k - 3) (k - 2)}{(k - 1) \left[1 - 2^{ - (k - 1)} \right]} \right\} \\
  &\qquad - (2 n H_n - 5 n + 2 H_n + 1). \nonumber
\end{align}

We use the order-statistics notation $X_{(1)}, \dots, X_{(n)}$ from
\refS{S:fixexact1}.  To derive~\eqref{savings}, we will compute the (random)
total savings for all comparisons with $X_{(i)}$ as pivot, sum over $i = 1,
\dots, n$, and take the expectation.  
For convenience, we may assume that the algorithm chooses a pivot also in
the case of a (sub)array with exactly 1 element, although it is not
compared to anything; thus every key becomes a pivot.
Observe that $X_{(i)}$ is compared as
pivot with keys $X_{(L)}, \dots, X_{(R)}$ 
(except itself)
and with no others, where $L
\equiv L(i)$ and $R \equiv R(i)$ with $L \leq i \leq R$ are the (random)
values uniquely determined by the condition that $X_{(i)}$ is the first
pivot chosen from among $X_{(L)}, \dots, X_{(R)}$ but not (if $L \neq 1$)
the first from among  
$X_{(L - 1)}, \dots, X_{(R)}$  nor (if $R \neq n$) the first from among
$X_{(L)}, \dots, X_{(R + 1)}$.  
Hence, $X_{(i)}$ is compared as a pivot with $R-L$ other keys.
The comparisons with $X_{(i)}$ as pivot are
performed with the knowledge that all the keys $X_{(L)}, \dots, X_{(R)}$
have values in the interval $(X_{(L - 1)}, X_{(R + 1)})$, where if $L = 1$
we interpret $X_{(0)}$ as $0 = .000\ldots$ and if $R = n$ we interpret
$X_{(n + 1)}$ as $1 = .111\ldots$.  The total savings gained by this
knowledge is  
$\sum_{j \in [L, R]:\,j \neq i} [b(X_{(L - 1)}, X_{(R + 1)}) -
  1] = (R - L)\,[b(X_{(L - 1)}, X_{(R + 1)}) - 1]$, where we recall that
$b(x, y)$ denotes the index of the first bit at which~$x$ and~$y$ differ. 

Therefore the grand total savings is
\begin{align*}
B_n - Q_n
  &= \sum_{i = 1}^n [R(i) - L(i)] \left[ b\left(X_{(L(i) - 1)}, X_{(R(i) + 1)}\right) - 1 \right] \\
  &= \sum_{(l, r):\,1 \leq l \leq r \leq n} (r - l)\,[b(X_{(l - 1)}, X_{(r + 1)}) - 1]\,\Bigl| \{i:(L(i), R(i)) = (l, r)\} \Bigr|,
\end{align*}
and so by independence we have
$$
\EE\,B_n - \EE\,Q_n = \hspace{-.2in}\sum_{(l, r):\,1 \leq l \leq r \leq n} (r - l)\,[\EE\,b(X_{(l - 1)}, X_{(r + 1)}) - 1]\,\EE \Bigl| \{i:(L(i), R(i)) = (l, r)\} \Bigr|.
$$
The second expectation on the right is easily computed:
$$
\EE \Bigl| \{i:(L(i), R(i)) = (l, r)\} \Bigr| = \sum_{i = l}^r \PP[(L(i), R(i)) = (l, r)] = (r - l + 1) \theta(l, r)
$$
where, abbreviating $r - l$ to~$d$ and writing ``xor'' for ``exclusive or'',
$$
\theta(l, r) = 
\begin{cases}
(d + 1)^{-1} - 2 (d + 2)^{-1} + (d + 3)^{-1} & \mbox{if $l \neq 1$ and $r \neq n$} \\ 
(d+ 1)^{-1} - (d + 2)^{-1} & \mbox{if $l = 1$ xor $r = n$} \\ 
(d + 1)^{-1} & \mbox{if $l = 1$ and $r = n$},
\end{cases}
$$
so that
$$
\EE \Bigl| \{i:(L(i), R(i)) = (l, r)\} \Bigr| =
\begin{cases}
2 [(d + 2) (d + 3)]^{-1} & \mbox{if $l \neq 1$ and $r \neq n$} \\ 
(d + 2)^{-1} & \mbox{if $l = 1$ xor $r = n$} \\ 
1 & \mbox{if $l = 1$ and $r = n$},
\end{cases}
$$
Therefore
\begin{align}
\lefteqn{\EE\,B_n - \EE\,Q_n} \nonumber \\
  &= 2 \sum_{(l, r):\,2 \leq l \leq r \leq n - 1} \frac{r - l}{(r - l + 2) (r - l + 3)}\,[\EE\,b(X_{(l - 1)}, X_{(r + 1)}) - 1] \nonumber \\
  &\qquad + \sum_{r = 1}^{n - 1} \frac{r - 1}{r + 1}\,[\EE\,b(0, X_{(r + 1)}) - 1] + \sum_{l = 2}^n \frac{n - l}{n - l + 2}\,[\EE\,b(X_{(l - 1)}, 1) - 1] \nonumber \\
  &\qquad + (n - 1)\,[\EE\,b(0, 1) - 1] \nonumber \\
  &= 2 \sum_{(l, r):\,2 \leq l \leq r \leq n - 1} \frac{r - l}{(r - l + 2) (r - l + 3)}\,[\EE\,b(X_{(l - 1)}, X_{(r + 1)}) - 1] \nonumber \\
  &\qquad + 2 \sum_{r = 1}^{n - 1} \frac{r - 1}{r + 1}\,[\EE\,b(0, X_{(r + 1)}) - 1] \nonumber \\
  &= 2 \sum_{i = 1}^n \sum_{j = i + 2}^n \frac{j - i - 2}{(j - i) (j - i + 1)}\,\EE\,b(X_{(i)}, X_{(j)}) \nonumber \\
  &\qquad + 2 \sum_{j = 2}^n \frac{j - 2}{j}\,\EE\,b(0, X_{(j)}) - q_n \nonumber \\
  &= 2 D_n + 2 E_n - q_n, \label{ABq}
\end{align}
where:\ at the second equality we have used symmetry and the observation that $b(0, 1) = 1$; the last two sums are denoted~$D_n$ and~$E_n$, respectively; and
\begin{align}
q_n 
  &:= 2 \sum_{(l, r):\,2 \leq l < r \leq n - 1} \frac{r - l}{(r - l + 2) (r - l + 3)} + 2 \sum_{r = 2}^{n - 1} \frac{r - 1}{r + 1} \nonumber \\
  &\phantom{:}= 2 n H_n - 5 n + 2 H_n + 1. \label{q}
\end{align}

The expectation $\EE\,b(X_{(i)}, X_{(j)})$ may be computed (for $1 \leq i < j \leq n$) by recalling the joint density $g_{n, i, j}$ of $(X_{(i)}, X_{(j)})$ given at~\eqref{joint}.  We then find
\begin{align*}
\EE\,b(X_{(i)}, X_{(j)})
  &= \sum_{\ell = 0}^{\infty} \PP[b(X_{(i)}, X_{(j)}) \geq \ell + 1] \\
  &= \sum_{\ell = 0}^{\infty} \sum_{m = 1}^{2^{\ell}} \int\!\!\!\!\int_{(m - 1) 2^{- \ell} < x < y < m 2^{- \ell}}\!g_{n, i, j}(x, y)\,dx\,dy \\
  &= \sum_{\ell = 0}^{\infty} \sum_{m = 1}^{2^{\ell}} 
    \int\!\!\!\!\int_{(m - 1) 2^{- \ell} < x < y < m 2^{- \ell}}\!\binom n {i - 1, 1, j - i - 1, 1, n - j} \\
  &\qquad \times \,x^{i - 1} (y - x)^{j - i - 1} (1 - y)^{n - j}\,dx\,dy. 
\end{align*}
Now, suppressing 
some computational details,
\begin{align*}
\lefteqn{\sum_{i = 1}^n \sum_{j = i + 2}^n \frac{j - i - 2}{(j - i) (j - i + 1)}\,\binom n {i - 1, 1, j - i - 1, 1, n - j} \,x^{i - 1} (y - x)^{j - i - 1} (1 - y)^{n - j}} \\
&= \sum_{i = 1}^n \sum_{j = i + 2}^n (j - i - 2)\,\binom n {i - 1, j - i + 1, n - j} \,x^{i - 1} (y - x)^{j - i - 1} (1 - y)^{n - j} \\
&= \sum_{k = 3}^n (k - 3)\,\binom{n}{k} (y - x)^{k - 2} \sum_{i = 0}^{n - k} \binom {n - k}{i} \,x^i (1 - y)^{n - k - i} \\
&= \sum_{k = 3}^n (k - 3)\,\binom{n}{k} (y - x)^{k - 2} [1 - (y - x)]^{n - k} \\
&= \frac{1}{2} \sum_{k = 3}^n (-1)^k (k - 3) (k - 2) \binom{n}{k} (y - x)^{k - 2},
\end{align*}
and so
\begin{align}
D_n 
  &= \sum_{\ell = 0}^{\infty} \sum_{m = 1}^{2^{\ell}} 
         \int\!\!\!\!\int_{(m - 1) 2^{- \ell} < x < y < m 2^{- \ell}} \nonumber \\
  &\hspace{1in} \left[ \frac{1}{2} \sum_{k = 3}^n (-1)^k (k - 3) (k - 2) \binom{n}{k} (y - x)^{k - 2} \right] dx\,dy \nonumber \\
  &= \frac{1}{2} \sum_{\ell = 0}^{\infty} 2^{\ell} 
          \int\!\!\!\!\int_{0 < x < y < 2^{- \ell}} \left[ \sum_{k = 3}^n (-1)^k (k - 3) (k - 2) \binom{n}{k} (y - x)^{k - 2} \right] dx\,dy \nonumber \\
  &= \frac{1}{2} \sum_{k = 3}^n (-1)^k \frac{(k - 3) (k - 2)}{(k - 1) k} \binom{n}{k} \sum_{\ell = 0}^{\infty} 2^{- \ell (k - 1)} \nonumber \\
  &= \frac{1}{2} \sum_{k = 2}^n (-1)^k \binom{n}{k} k^{-1} \frac{(k - 3) (k - 2)}{(k - 1) [1 - 2^{- (k - 1)}]}. \label{A}
\end{align}

Similarly (and somewhat more easily), one sees (for $1 \leq j \leq n$) that
\begin{align*}
\EE\,b(0, X_{(j)})
  &= \sum_{\ell = 0}^{\infty} \PP[b(0, X_{(j)}) \geq \ell + 1] \\
  &= \sum_{\ell = 0}^{\infty} \int^{2^{- \ell}}_0\!\binom n {j - 1, 1, n - j} y^{j - 1} (1 - y)^{n - j}\,dy
\end{align*}
and that
\begin{align*}
\sum_{j = 2}^n \frac{j - 2}{j}\,\binom n {j - 1, 1, n - j} y^{j - 1} (1 - y)^{n - j}
&= \sum_{k = 2}^n (-1)^{k - 1} (k - 2) \binom{n}{k} y^{k - 1},
\end{align*}
whence
\begin{align}
E_n 
  &=\sum_{\ell = 0}^{\infty} \int^{2^{- \ell}}_0\!
         \left[ \sum_{k = 2}^n (-1)^{k - 1} (k - 2) \binom{n}{k} y^{k - 1} \right] dy \nonumber \\
  &= \sum_{k = 2}^n (-1)^{k - 1} \frac{k - 2}{k} \binom{n}{k} \sum_{\ell = 0}^{\infty} 2^{- \ell k} \nonumber \\
  &= \sum_{k = 2}^n (-1)^{k - 1} \binom{n}{k} k^{-1} \frac{k - 2}{1 - 2^{- k}}. \label{B}
\end{align}
Plugging~\eqref{q}--\eqref{B} into~\eqref{ABq}, we obtain~\eqref{savings}, thus completing the proof.
\end{proof}

\appendix
\section{Some calculus}
\label{S:appendixA}

The following calculus lemmas
establish the calculations \eqref{firstcalc}--\eqref{fourthcalc}
used in the proof of \refL{L:Poikey}.

\begl
Define
\begin{align*}
\gamma_0(z) &:= \int^{\infty}_0\!e^{-y} y^z\,dy, && \Re z > -1; \\
\gamma_1(z) &:= \int^{\infty}_0\!\left( e^{-y} - {\bf 1}[y < 1] \right) y^z\,dy, && \Re z > -2; \\
\gamma_2(z) &:= \int^{\infty}_0\!\left( e^{-y} - 1 + y {\bf 1}[y < 1] \right) y^z\,dy, && -3 < \Re z < -1.
\end{align*}
Then the following identities hold for $z \neq -1$:
\begin{align*}
\gamma_0(z) &= \Gamma(z + 1), \\
\gamma_1(z) &= (z + 1)^{-1} [\gamma_0(z + 1) - 1] = (z + 1)^{-1} [\Gamma(z + 2) - 1], \\
\gamma_2(z) &= (z + 1)^{-1} [1 + \gamma_1(z + 1)],
\end{align*}
and so $\gamma_1(-1) = \Gamma'(1) = - \gamma$ and $\gamma_2(-2) = - [1 + \gamma_1(-1)] = - (1 - \gamma)$.
\enl

\begin{proof}
The identity for $\gamma_0$ is the definition of the function~$\Gamma$, and the identities for $\gamma_1$ and $\gamma_2$
follow by integration by parts.  Since $\gamma_1(z)$ is continuous in~$z$ for $\Re z > -2$, it follows from the identity for $\gamma_1(z)$
by passage to the limit that $\gamma_1(-1) = \Gamma'(1) = - \gamma$.  Finally, we obtain the desired value of $\gamma_2(-2)$ simply
by plugging $z = -2$ into the identity for $\gamma_2(z)$.
\end{proof}

Let $s\fall k$ denote the falling factorial power $s(s-1)\dotsm(s-k+1)$.

\begin{lemma}
  \label{LA2}
For any fixed $s\in\bbC$ and $M=0,1,\dots$, and all $\gl\ge1$,
\begin{equation*}
  \int^{\infty}_\gl\!e^{-y} y^{s}\,dy 
= e^{-\gl} \gl^{s}\lrbrack{\sum_{k=0}^{M-1} s\fall k \gl^{-k}
 +O(\gl^{-M})}.
\end{equation*}
(The implicit constant depends on $s$ and $M$, but not on $\gl$.)
\end{lemma}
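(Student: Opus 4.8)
The plan is to prove \refL{LA2} by iterated integration by parts, peeling off one power of $\gl$ at a time, and then estimating the leftover integral directly, using the hypothesis $\gl\ge1$ to keep the estimate uniform.

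First I would record that the integral $I_s(\gl):=\int^\infty_\gl e^{-y}y^s\,dy$ converges absolutely for every $s\in\bbC$, since the lower endpoint is at least~$1$ and $e^{-y}$ decays faster than any power of~$y$. Integrating by parts with $u=y^s$ and $dv=e^{-y}\,dy$, the boundary term at~$\infty$ vanishes while the one at~$\gl$ contributes $e^{-\gl}\gl^s$, which gives the recursion $I_s(\gl)=e^{-\gl}\gl^s+s\,I_{s-1}(\gl)$. Applying this identity $M$ times (with the conventions that an empty sum is~$0$ and $s\fall{0}=1$) yields
$$ I_s(\gl)=e^{-\gl}\gl^s\sum_{k=0}^{M-1}s\fall{k}\,\gl^{-k}+s\fall{M}\,I_{s-M}(\gl), $$
so the lemma reduces to showing $I_{s-M}(\gl)=O\bigpar{e^{-\gl}\gl^{\Re s-M}}$ uniformly for $\gl\ge1$, the constant $s\fall{M}$ being harmless.

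For that remainder bound I would substitute $y=\gl+t$, writing $I_{s-M}(\gl)=e^{-\gl}\int^\infty_0 e^{-t}(\gl+t)^{s-M}\,dt$, and then split according to the sign of $\Re(s-M)$. When $\Re(s-M)\le0$ we have $|(\gl+t)^{s-M}|=(\gl+t)^{\Re(s-M)}\le\gl^{\Re(s-M)}$, and $\int^\infty_0 e^{-t}\,dt=1$. When $\Re(s-M)>0$, the inequality $\gl+t\le\gl(1+t)$ (valid because $\gl\ge1$) gives $|(\gl+t)^{s-M}|\le\gl^{\Re(s-M)}(1+t)^{\Re(s-M)}$, and $\int^\infty_0 e^{-t}(1+t)^{\Re(s-M)}\,dt$ is a finite constant depending only on~$s$ and~$M$. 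In either case $|I_{s-M}(\gl)|\le C(s,M)\,e^{-\gl}\gl^{\Re s-M}$, as needed.

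I do not expect any real obstacle here; the one point that wants a little care is the uniformity of the remainder bound in~$\gl$, and that is precisely where the hypothesis $\gl\ge1$ is used, to pull the power $\gl^{\Re(s-M)}$ outside the integral. The case $M=0$ is covered automatically: the expansion then asserts only $I_s(\gl)=e^{-\gl}\gl^s\,O(1)$, which is the $M=0$ instance of the displayed remainder estimate.
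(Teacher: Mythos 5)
Your proof is correct and rests on the same integration-by-parts recursion $I_s(\gl)=e^{-\gl}\gl^s+s\,I_{s-1}(\gl)$ that drives the paper's argument, so the two proofs are essentially the same in structure. The one place you diverge is the finishing step: where the paper establishes the bound directly only for $\Re s\le 0$, runs induction up to $\Re s\le M$, and then handles $\Re s>M$ by invoking the statement with a larger $M'\ge\Re s$ and re-truncating, you instead unroll the recursion $M$ times and bound the leftover $I_{s-M}(\gl)$ uniformly in both sign cases via the substitution $y=\gl+t$ and the inequality $\gl+t\le\gl(1+t)$ (using $\gl\ge1$). This is a slightly more self-contained way to close the estimate, but it buys nothing substantive; both routes are short and clean.
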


\begin{proof}
For $\gl > 0$, let $\ils :=  \int^{\infty}_\gl\!e^{-y} y^{s}\,dy$.
If $\Re s\le0$, then
\begin{equation*}
  |\ils|\le \int^{\infty}_\gl e^{-y} y^{\Re s}\,dy 
\le \int^{\infty}_\gl e^{-y} \gl^{\Re s}\,dy 
=\gl^{\Re s}e^{-\gl} ,
\end{equation*}
which yields the result for $\Re s\le M=0$.

Further, integration by parts yields 
\begin{equation*}
  \ils=e^{-\gl}\gl^s+s\ilsi,
\end{equation*}
and the result for $\Re s\le M$ follows by induction on~$M$.
Finally, if $\Re s > M$, we use the result just proven with $M$ replaced by
some $M'\ge\Re s$.
\end{proof}

\section{Proof of \refR{R:holder}}
\label{S:appendixB}

We prove that if
\begin{equation}
\label{holdass}
\mbox{$f$ is \Holder($\alpha$) continuous on $[0, 1]$ for some $\ga > 0$,}
\end{equation}
then, as claimed in~\refR{R:holder}, the conclusion of \refT{T:Poif} holds with the remainder $o(\gl \log \gl)$ improved to $O(\gl)$.

\begin{proof}
Using the notation $m \equiv m(\gl) := \lc \lg \gl \rc$ of the proof of \refT{T:Poif} appearing in \refS{S:Poif}, it follows from that proof that we need only establish the asymptotic estimate
$$
\sum_{k = 0}^m \left( \int\!f_k \ln f_k - \int\!f \ln f \right) = O(1)
$$
as $\gl \to \infty$, and for this it is clearly sufficient to show that
\begin{equation}
\label{flogf}
f_k(x) \ln f_k(x) - f(x) \ln f(x) = O((k + 1) 2^{- k \alpha})\mbox{\ uniformly in $x \in [0, 1]$}.
\end{equation}
But indeed~\eqref{holdass} evidently implies
$$
f_k(x) - f(x) = O( 2^{- k \alpha})\mbox{\ uniformly in $x \in [0, 1]$},
$$
and thence, routinely, \eqref{flogf}.
\end{proof}

\end{document}